\documentclass[11pt,article,reqno]{amsart}
\usepackage{amsmath, amsthm, amsfonts,graphicx}
\usepackage{subfigure}
\usepackage{amssymb,url}
\usepackage{adjustbox}
\usepackage{hyperref}
\usepackage[top=29mm,bottom=29mm,left=30mm,right=30mm]{geometry}
\usepackage{mathrsfs}

\usepackage{geometry}
\usepackage{color}
\usepackage{mathtools}
\makeatletter

\def\section{\@startsection{section}{1}%
\z@{1\linespacing\@plus\linespacing}{1\linespacing}%
{\bf\centering}}
\def\subsection{\@startsection{subsection}{0}%
\z@{\linespacing\@plus\linespacing}{\linespacing}%
{\bf}}
\def\subsubsection{\@startsection{subsubsection}{0}%
\z@{\linespacing\@plus\linespacing}{\linespacing}%
{\bf}}
\makeatother

\usepackage{mathrsfs}

\usepackage[square, numbers, sort]{natbib}
\bibliographystyle{abbrvnat}

\bibliographystyle{unsrt}
\usepackage{bm}
\usepackage{ragged2e}
\usepackage{bm}
\usepackage{multirow, booktabs}
\usepackage{amssymb,epsfig,epstopdf,url,array}

\theoremstyle{plain}
\newtheorem{thm}{Theorem}[section]
\newtheorem{lem}[thm]{Lemma}
\newtheorem{prop}[thm]{Proposition}

\theoremstyle{remark}
\newtheorem{definition}[thm]{Definition}
\newtheorem{rem}{Remark}

\numberwithin{equation}{section}

\begin{document}
\title[A finite-state stationary process with long-range dependence]{A finite-state stationary process with long-range dependence and fractional multinomial distribution}

\author{}
\address{Jeonghwa Lee, Department of Statistics, Truman State University, USA}
\email{jlee@truman.edu}

\author{Jeonghwa Lee}

\thanks{
Jeonghwa Lee: {Department of Statistics, Truman State University, USA,}
{jlee@truman.edu}}
\begin{abstract}
 We propose a discrete-time, finite-state stationary process that can possess long-range dependence. Among the  interesting features of this process is that each state can have different long-term dependency, i.e., the  indicator sequence can have different Hurst index for different states. Also, inter-arrival time for each state follows heavy tail distribution, with different states showing different tail behavior. A possible application of this process is to model over-dispersed multinomial distribution. In particular, we define fractional multinomial distribution from our model.
\vspace{5pt}
\\ \\
\emph{Key-words}:  Long-range dependence, Hurst index, Over-dispersed multinomial distribution.  \\ \\
MSC: 60G10, 60G22
\end{abstract}

\maketitle

\baselineskip 0.5 cm

\bigskip \medskip

\section{Introduction}

Long-range dependence (LRD) refers to a phenomenon where correlation decays slowly with the time lag in  stationary process in a way that correlation function  is no longer summable.  This phenomenon was first observed by Hurst \cite{Hu1,Hu2} and since then it has been observed in many fields such as economics, hydrology, internet traffic, queueing networks, etc.(\cite{BE, DEL, MAJ, Sam})  
In a second order stationary process, LRD can be measured by Hurst index $H$ \cite{Dal1, Dal2},
\[H=\inf \{h: \limsup_{n\rightarrow \infty} n^{-2h+1} \sum_{k=1}^{n} cov(X_1,X_k) <\infty\}. \]
Note that $H\in (0,1),$ and  
if $H\in(1/2,1),$ the process possesses long-memory property.

Among the well known stochastic processes that are stationary  and possess long-range dependence are
fractional Gaussian noise (FGN) \cite{Man} and fractional autoregressive integrated moving average processes (FARIMA) \cite{HOS,HOS2}. 

\par  Fractional Gaussian noise $X_j$ is a mean-zero, stationary Gaussian process with covariance function \[\gamma(j):=cov(X_0,X_j)=\frac{var(X_0)}{2}(|j+1|^{2H}-2|j|^{2H}+|j-1|^{2H})\] 
where $H\in(0,1)$ is Hurst parameter.
The covariance function  obeys power law with exponent $2H-2$ for large lag, \begin{equation*}
     \gamma(j)\sim var(X_0)H(2H-1)j^{2H-2} \text{ as } j\rightarrow \infty.   \end{equation*}
If $H\in (1/2, 1),$ then the
covariance function decreases  slowly with power law, and
$\sum_j \gamma(j)=\infty,$ i.e., it has long-memory property. 

A FARIMA(p,d,q) process $\{X_t\}$ is the solution of \[\phi(B) \triangledown^d X_t=\theta(B)\epsilon_t,  \] where $p,q$ are positive integers, $d$ is real, $B$ is the backward shift, $BX_t=X_{t-1},$ and fractional- differencing operator  $\triangledown^d$, autoregressive operator $\phi$, and moving average operator $\theta$ are, respectively,
\begin{align*}
    \triangledown^d&=(1-B)^d=\sum_{k=1}^{\infty}\frac{d(d-1)\cdots(d+1-k)}{k!}(-B)^k,\\
    \phi(B)&=1-\phi_1B-\phi_2 B^2\cdots -\phi_p B^p,  \\
    \theta(B)&=1-\theta_1 B-\theta_2 B^2\cdots -\theta_q B^q .
    \end{align*}
$\{\epsilon_t\}$ is white-noise process which consists of iid random variables with the finite second moment. Here, the parameter $d$ manages long-term dependence structure, and by its relation to Hurst index, $H=d+1/2,$ $d\in(0,1/2)$ corresponds to the long-range dependence in FARIMA process.

Another class of stationary process that can possess long-range dependence is from countable-state Markov process  \cite{CAR}. In a stationary, positive recurrent, irreducible, aperiodic Markov chain, the indicator sequence of visits to a certain state is long-range dependence if and only if return time to the state has infinite second moment,  and this is possible only when the Markov chain has infinite state space. Moreover, if one state has  the infinite second moment of return time, then all the other states also have the infinite second moment of return time, and all the states have the same rate of dependency, that is, the indicator sequence of each  state  is long-range dependence with the same Hurst index.

In this paper, we develop a discrete-time finite-state stationary process that can possess long-range dependence. 
We define a stationary process $\{X_{i}, i\in \mathbb{N}\}$ where the number of possible outcomes of  $X_i$ is finite, $S=\{0,1,\cdots,m\}$ for any $ m\in \mathbb{N},$    and  for $k=1,2,\cdots ,m,$   \begin{equation}
cov(I_{\{X_i=k\}}, I_{\{X_j=k\}})=c_k'  |i-j|^{2H_k-2},  \end{equation} for any $i,j\in \mathbb{N}, i\neq j,$  and some constants $c_k' \in \mathbb{R}_{+}, H_k\in (0,1).$ This leads to
\begin{equation} cov(X_i, X_j)\sim c'_{k'}|i-j|^{2H_{k'}-2} \hspace{10pt}\text{as $|i-j|\rightarrow \infty$,} \end{equation} where $k'=argmax_k\{H_{k};k=1,\cdots,m\}.$
If $H_{k'}=\max\{H_k;k=1,\cdots,m\}\in (1/2,1)$, (1.2) implies that as $n\rightarrow \infty, $ $\sum_{i=1}^{n} cov(X_1,X_i)$ diverges with the rate  of $ |n|^{2H_{k'}-1}$, and the process is said to have long-memory with Hurst parameter $H_{k'}$. Also, from (1.1), for $k=\{1,\cdots,m\},$ the process $\{I_{\{X_i=k\}}; i=1,2,\cdots\}$ is long-range dependence if $H_k \in (1/2,1).$ In particular, if $H_i\neq H_j,$ then the states $``i"$ and $``j"$ produce different level of dependence. For example, if $H_i<1/2<H_j,$ then the state $``j"$ produces long-memory counting process whereas state $``i"$ produces short-memory process. 

A possible application of our stochastic process is to model over-dispersed multinomial distribution. In multinomial distribution, there are $n$ trials, each trial results in one of finite outcomes, and the outcomes of trials are independent and identically distributed. When applying  multinomial model to a real data, it is often observed that the variance is larger than what is assumed to be, which is called over-dispersion, due to the violation of  the assumption that trials are independent and have identical distribution  \cite{Dea, Poo}, and there have been several ways to model overdispersed  multinomial distribution \cite{Afr,Afr1,Lan,Mos}.
\par Our stochastic process provides a new method to model over-dispersed multinomial distribution  by introducing  dependency among trials. In particular, the variance of the number of a certain outcome among $n$ trials is asymptotically proportional to fractional exponent of $n,$ from which we define \[ Y_k:=\sum_{i=1}^n I_{\{X_i=k\}} 
\text{ for } k=1,2,\cdots,m,\] and call the distribution of $(Y_1,Y_2,\cdots,Y_m )$ fractional multinomial distribution.

\par The work in this paper is an extension of the earlier work of generalized Bernoulli process \cite{LEE}, and the process in this paper is reduced to the generalized Bernoulli process if there are only two states in the possible outcomes of $X_i$, e.g., $S=\{0,1\}$. 
\par In Section 2, finite state stationary process that can possess long-range dependence is developed. In Section 3, the properties of our model are investigated with regard to tail behavior and moments of inter-arrival time of a certain state $``k"$, and conditional probability of observing a state $``k"$ given the past observations in the process. In Section 4, fractional multinomial distribution is defined, followed by conclusion in Section 5. Some proofs of proposition and theorems are in Section 6.
 
\par Throughout this paper, $\{i, i_0, i_1,\cdots\},\{i',i_0',i_1',\cdots \} \subset \mathbb{N},$ with $i_0<i_1<i_2<\cdots,$ and $i_0'<i_1'<i_2'<\cdots.$ For any set $A=\{i_0, i_1,\cdots,i_n \},$ $|A|=n+1,$ the number of elements in the set $A,$ and for the empty set, we define $|\emptyset|=0.$ 

\section{Finite-state stationary process with long-range dependence}

We will define stationary process $\{X_{i}, i\in \mathbb{N}\}$ where the set of possible outcomes of  $X_i$ is finite, $S=\{0,1,\cdots,m\},$ for $m\in \mathbb{N}$, with the probability that we observe a state $``k"$ at time $i$ is $P(X_i=k)=p_k>0,$ for $k=0,1,\cdots,m,$ and $\sum_{k=0}^{m}p_k=1.$ 

For any set $A=\{i_0,i_1,\cdots,i_n\} \subset \mathbb{N}$, define the operator \begin{align*}
    L_{H,p,c}^*(A):= p\prod_{j=1,\cdots,n}(p+c|i_j-i_{j-1}|^{2H-2}) .
\end{align*} 
If $A=\emptyset,$ define $L_{H,p,c}^*(A):=1,$ and if $A=\{i_0\},  L_{H,p,c}^*(A):=p.$

Let ${\bf H}= (H_1,H_2,\cdots,H_m), {\bf p}=(p_1,p_2,\cdots, p_m), {\bf c}=(c_1,c_2,\cdots, c_m) $ be vectors of length $m,$ and $ {\bf H},{\bf p},{\bf c}\in (0,1)^m.$ We are now ready to define the following operators.
\begin{definition}

Let $A_0, A_1, \cdots, A_m \subset \mathbb{N}$ be pairwise disjoint, and $A_0=n'>0.$ Define
\begin{align*}
L_{\bf H, \bf p, \bf c}^*(A_1,A_2,\cdots ,A_m):=\prod_{k=1,\cdots m } L_{H_k,p_k, c_k}^*(A_k),\end{align*} and
\begin{align*}
D_{\bf H, \bf p, \bf c}^*(A_1,A_2,\cdots ,A_m;A_0):=\sum_{\ell=0}^{n'} (-1)^{\ell}\sum_{\substack{|B|=\ell \\ B\subset A_0}}\sum_{\substack{B_i \subset B \\ B_i\cap B_j=\emptyset\\ \cup B_i=B}}L_{\bf H, \bf p, \bf c}^*(A_1\cup B_1,A_2 \cup B_2,\cdots ,A_m \cup B_m).
\end{align*}
\end{definition}
For ease of notation, we denote $D^*_{\bf H, \bf p, \bf c}, $ $L^*_{\bf H, \bf p, \bf c}, $ and 
$L^*_{H_k,  p_k,  c_k}$ by ${\bf D}^*,{\bf L}^*, { L^*_k}, $ respectively.
Note that if $A_0=\{i_0\},$ \begin{equation}
    {\bf D}^*(A_1,A_2,\cdots,A_m;A_0)=\prod_{k=1,\cdots, m } L_{k}^*(A_k)\bigg(1-\sum_{k'=1}^{m} \frac{L_{k'}^*(A_{k'}\cup\{i_0\})}{L_{k'}^*(A_{k'})}   \bigg).\end{equation}
For any pairwise disjoint sets $A_0, A_1,\cdots A_m \subset \mathbb{N},$ if
${\bf D}^*(A_1,A_2,\cdots,A_m;A_0)
>0,$ then $\{X_i; i\in\mathbb{N}\}$ is well defined stationary process with the following  probabilities. \begin{align}
    &P( \cap_{i\in A_k} \{ X_i=k\})= L^*_k(A_k) , \text{ for } k=1,\cdots,m,\\ &
P(\cap_{k=1,\cdots,m} \cap_{i\in A_k} \{ X_i=k\})=\prod_{k=1,\cdots, m } L^*_k(A_k),\\&
P( \cap_{k=0,\cdots,m} \cap_{i\in A_k} \{ X_i=k\})={\bf D}^*(A_1,A_2,\cdots,A_m;A_0).\end{align}

In particular, if the stationary process with the probability above is well defined, then, for $k,k'= 1,\cdots,m,$ we have
\begin{align*}
P(X_i=k, X_j=k)&=p_k(p_k+c_k|j-i|^{2H_k-2}) , \\P(X_i=k, X_j=k')&=p_k p_{k'},   \end{align*} 
  \begin{align*}
P(X_i=0, X_j=0 )&=1-2\sum_{k=1,\cdots, m}P(X_i=k)+\sum_{k,k'=1,\cdots,m}P(X_i=k,X_j=k')\nonumber\\&=1-2\sum_{k=1}^{m} p_k+\sum_{k=1}^{m}p_k(p_1+p_2+\cdots+p_m+c_k|i-j|^{2H_{k}-2} )\nonumber\\&=p_0^2+\sum_{k=1}^{m}p_kc_k|i-j|^{2H_{k}-2},\\P(X_i=k, X_j=0 )&=P(X_i=0, X_j=k)=p_k(1-p_1-p_2-\cdots -p_m-c_k|i-j|^{2H_k-2}) \nonumber \\&=p_k(p_0-c_k|i-j|^{2H
_k-2}).
\end{align*}

As a result, for $i\neq j, i,j\in \mathbb{N}, k\neq k', k,k'\in \{1,2,\cdots,m\},$
\begin{align}
    &cov(I_{\{X_i=k\}}, I_{\{X_j=k\}})=p_kc_k|i-j|^{2H_k-2},\\
  &cov(I_{\{X_i=k\}}, I_{\{X_j=k'\}})=0,\\
  &cov(I_{\{X_i=0\}}, I_{\{X_j=0\}})=\sum_{k=1}^m p_kc_k|i-j|^{2H_k-2},\\
  &cov(I_{\{X_i=k\}}, I_{\{X_j=0\}})=-p_kc_k|i-j|^{2H_k-2}.
  \end{align} 
Note that $(\{I_{\{X_i=1\}}\}_{i\in \mathbb{N}}, \{I_{\{X_i=2\}}\}_{i\in \mathbb{N}}, \cdots,\{I_{\{X_i=m\}}\}_{i\in \mathbb{N}})$ are $m$ generalized Bernoulli processes with Hurst parameter, $H_1, H_2, \cdots, H_m$, respectively. (see \cite{LEE}). However, they are not independent, since for $\ell\neq k, \ell\in \{1,2,\cdots, m\},$
\[P(\{I_{\{X_i=\ell \}}=1\} \cap \{I_{\{X_i=k \}}=1\} )=0 \neq P(I_{\{X_i=\ell \}}=1)P(I_{\{X_i=k \}}=1)=p_{\ell}p_{k}.\]

Also, we have
   \begin{align*}
cov(X_i, X_j)&=E(X_iX_j)-E(X_i)E(X_j)\\&=\sum_{k,k'}kk'P(I_{\{X_i=k\}}=1, I_{\{X_j=k'\}}=1   )-\sum_{k,k'}kk'p_kp_{k'}\\&=\sum_{k=1,\cdots,m}k^2p_kc_k|i-j|^{2H_k-2}.\end{align*}
Therefore, the process $\{X_i\}_{i\in \mathbb{N}}$ possesses long-range dependence if $\min\{H_1,\cdots, H_k\}>1/2.$

All the results appear in this paper are valid regardless of how the finite-state space of $X_i$ is defined. More specifically, given that  
${\bf D}^*(A_1,A_2,\cdots,A_m;A_0)
>0$ for any pairwise disjoint sets $A_0, A_1,\cdots A_m \subset \mathbb{N},$ we can define probability (2.2-2.4) with any state space $S=\{s_0,s_1, s_2, \cdots, s_m \} \subset \mathbb{R}$  for any $m \in \mathbb{N}$ in the following way.
\begin{align*}
    &P( \cap_{i\in A_k} \{ X_i=s_k\})= L^*_k(A_k) , \text{ for } k=1,\cdots,m,\\ &
P(\cap_{k=1,\cdots,m} \cap_{i\in A_k} \{ X_i=s_k\})=\prod_{k=1,\cdots, m } L^*_k(A_k),\\&
P( \cap_{k=0,\cdots,m} \cap_{i\in A_k} \{ X_i=s_k\})={\bf D}^*(A_1,A_2,\cdots,A_m;A_0).\end{align*}
Note that the only difference is that the space ``$k"$ is replaced by ``$s_k".$ As a result, we can obtain the same results as (2.5-2.8) except that $I_{\{X_i=k\}}$ is replaced by $I_{\{X_i=s_k\}},$ and we get
\begin{align*}
cov(X_i, X_j)&=cov(X_i-s_0, X_j-s_0)\\&=\sum_{k,k'=1,\cdots,m}s_ks_k'P(I_{\{X_i=s_k\}}=1, I_{\{X_j=s_k'\}}=1   )-\sum_{k,k'=1,\cdots,m}s_ks_k'p_kp_{k'}\\&=\sum_{k=1,\cdots,m}(s_k-s_0)^2p_kc_k|i-j|^{2H_k-2}.\end{align*}
In a similar way, all the results in this paper can be easily transfered to any finite-state space $S \subset \mathbb{R}.$ For the sake of simplicity, we assume  $S=\{0,1,\cdots, m\}, m\in \mathbb{N},$ without loss of generality, and define $S^0:=\{1,\cdots, m\}$.

Now we will give a restriction on the parameter values, $\{H_k, p_k, c_k; k\in S^0\}$, which will make ${\bf D}^*(A_1,A_2,\cdots,A_m;A_0)
>0$ for any pairwise disjoint sets $A_0,\cdots A_m \subset \mathbb{N},$
therefore, the process $\{X_i\}$ is well-defined with the probability (2.2-2.4).

 ASSUMPTION:\\  
(a) $c_k,H_k,p_k \in (0,1)$ for $ k\in S^0. $
\\(b) For any  $i_0<i_1<i_2$, $i_0,i_1,i_2\in\mathbb{N},$
\begin{equation}
 \sum_{k=1}^{m}\frac{(p_k+c_k|i_1-i_0|^{2H_k-2})(p_k+c_k|i_2-i_1|^{2H_k-2})}{p_k+c_k|i_2-i_0|^{2H_k-2}}<1.\end{equation}
 
For the rest of the paper, it is assumed that ASSUMPTION (a, b) holds.

\begin{rem}
(a). (2.9) holds if \[\sum_{k=1}^{m}\frac{(p_k+c_k)(p_k+c_k)}{p_k+c_k2^{2H_k-2}}<1,\] since \[\frac{(p_k+c_k|i_1-i_0|^{2H_k-2})(p_k+c_k|i_2-i_1|^{2H_k-2}) }{(p_k+c_k|i_2-i_0|^{2H_k-2})}\] is maximized when $i_2-i_0=2, i_1-i_0=1,$ as it was seen in Lemma 2.5 of  \cite{LEE}.\\
(b). If $({i_{1}-i_{0}})/({i_{2}-i_{0}})\rightarrow 0, ({i_{2}-i_{1}})/({i_{2}-i_{0}})\rightarrow 1$ with $i_{2}-i_{0}\rightarrow \infty$ in (2.9),
then we have
\begin{equation}
\sum_{k=1}^{m} p_k+c_k|i_{1}-i_{0} |^{2H_k-2}   <1,\end{equation}
and this, together with (2.9), implies that for any set $\{A_k, i_k'\} \subset \mathbb{N}, $ \[\sum_{k=1}^{m}\frac{L_{k }^*(A_k \cup\{ i_k'\} )}{L_{k }^*(A_k)}  <1.\] This means that for any $A_0=\{i_0\}\subset \mathbb{N},$  ${\bf D}^*(A_1,A_2,\cdots,A_m;A_0)
>0 $   by (2.1).
\\(c). From (2.10),  $\sum_{k=1}^m c_k<1-\sum_{k=1}^{m}p_k=p_0.$
\\(d). If $m=1,$ (2.9) is reduced to (2.20) in the Lemma 2.5 in \cite{LEE}.
\end{rem}

Now we are ready to show that 
$\{X_i, i\in \mathbb{N}\}$ is well defined with probability (2.2-2.4).

\begin{prop}
For any disjoint sets $ A_0,A_1,A_2,\cdots,A_m \subset \mathbb{N}, A_0\neq \emptyset,$
\[{\bf D}^*(A_1,A_2,\cdots,A_m;A_0)>0. \]

\end{prop}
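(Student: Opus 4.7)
The plan is induction on $N:=|A_0|$, guided by the probabilistic reading
\[
\mathbf{D}^*(A_1,\ldots,A_m;A_0) = P\!\Bigl(\bigcap_{k=1}^m \bigcap_{i\in A_k}\{X_i=k\}\ \cap \bigcap_{j\in A_0}\{X_j=0\}\Bigr),
\]
which is what one formally obtains by inclusion--exclusion from $\{X_j=0\}=\Omega\setminus\bigcup_{k\ge 1}\{X_j=k\}$; positivity of $\mathbf{D}^*$ is exactly what is needed for the right-hand side to be a genuine probability.

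For the base case $N=1$ with $A_0=\{i_0\}$, identity (2.1) reduces the claim to
\[
\sum_{k=1}^m r_k<1,\qquad r_k:=\frac{L_k^*(A_k\cup\{i_0\})}{L_k^*(A_k)}.
\]
Let $j^-:=\max\{j\in\bigcup_k A_k:j<i_0\}$ and $j^+:=\min\{j\in\bigcup_k A_k:j>i_0\}$ (with $\pm\infty$ if absent), and let $j_k^\pm$ be the analogous neighbors inside $A_k$. Because the $A_k$ are pairwise disjoint, $j_k^-\le j^-$ and $j_k^+\ge j^+$ for every $k$. A short derivative check on
\[
f_k(x,y):=\frac{(p_k+c_k(i_0-x)^{2H_k-2})(p_k+c_k(y-i_0)^{2H_k-2})}{p_k+c_k(y-x)^{2H_k-2}}
\]
gives $\partial_x f_k>0$ and $\partial_y f_k<0$ on $\{x<i_0<y\}$, so each $r_k=f_k(j_k^-,j_k^+)\le f_k(j^-,j^+)$ (with obvious limits when $A_k=\emptyset$ or $j_k^\pm=\pm\infty$). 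Summing and invoking (2.9) at the common triple $(j^-,i_0,j^+)$ yields $\sum_k r_k<1$; the cases where $j^-$ or $j^+$ is absent reduce to (2.10), and the case where all $A_k$ are empty uses $\sum_k p_k=1-p_0<1$ from Remark 2.2(c).

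For the inductive step $N\ge 2$, fix $i^*\in A_0$ and set $A_0':=A_0\setminus\{i^*\}$. Splitting the defining alternating sum according to whether $i^*\in B$ yields the recursion
\[
\mathbf{D}^*(A_1,\ldots,A_m;A_0)=\mathbf{D}^*(A_1,\ldots,A_m;A_0')-\sum_{k=1}^m \mathbf{D}^*(A_1,\ldots,A_k\cup\{i^*\},\ldots,A_m;A_0'),
\]
whose terms are all strictly positive by the inductive hypothesis. Dividing through, positivity of the left-hand side becomes equivalent to the conditional bound
\[
\sum_{k=1}^m \frac{\mathbf{D}^*(A_1,\ldots,A_k\cup\{i^*\},\ldots,A_m;A_0')}{\mathbf{D}^*(A_1,\ldots,A_m;A_0')}<1, \qquad (\star)
\]
that is, $P(X_{i^*}=0\mid X_i=k\text{ for }i\in A_k,\ X_j=0\text{ for }j\in A_0')>0$.

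The main obstacle is $(\star)$. The cleanest route I would pursue is an FKG-type negative-association bound comparing conditional to unconditional probabilities,
\[
P(X_{i^*}=k\mid E\cap E_0')\le P(X_{i^*}=k\mid E)=r_k, \quad E:=\bigcap_k\!\bigcap_{i\in A_k}\{X_i=k\},\ E_0':=\bigcap_{j\in A_0'}\{X_j=0\},
\]
which, combined with the base case, closes $(\star)$ termwise. The covariance identities (2.5)--(2.8) show that each $\{X_{i^*}=k\}$ is negatively correlated with each $\{X_j=0\}$, so such monotonicity is very natural; the genuine difficulty is that it must be established on the $\mathbf{D}^*$-measure itself, not on an ambient product measure. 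I would attempt this by a secondary induction on $|A_0'|$, removing one element of $A_0'$ at a time via the same recursion and using the already-known base-case inequality applied to each enlarged configuration $(A_1\cup B_1,\ldots,A_m\cup B_m,\{i^*\})$ arising in the expansion, in order to neutralize the alternating signs $(-1)^{|B|}$ that otherwise block a termwise comparison.
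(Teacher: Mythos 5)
Your overall architecture matches the paper's: an induction that peels one point off $A_0$, the recursion $\mathbf{D}^*(A_1,\ldots,A_m;A_0)=\mathbf{D}^*(A_1,\ldots,A_m;A_0')-\sum_k\mathbf{D}^*(\ldots,A_k\cup\{i^*\},\ldots;A_0')$, reduction to the conditional bound $(\star)$, and a base case via (2.1) and a neighbor-monotonicity argument feeding into (2.9). The base case is sound (indeed you spell out the reduction to a common triple more explicitly than the paper's Lemma 6.2 does). The problem is that $(\star)$ --- equivalently the comparison $P(X_{i^*}=\ell\mid E\cap E_0')\le L_\ell^*(A_\ell\cup\{i^*\})/L_\ell^*(A_\ell)$ --- is where the entire difficulty of the proposition lives, and you do not prove it: you say you ``would pursue'' an FKG-type bound and ``would attempt'' a secondary induction to neutralize the alternating signs. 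That is a plan, not an argument, and it is precisely the step on which the paper spends essentially all of its Section 6 proof.

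Concretely, the paper establishes your $(\star)$ by (i) taking $i^*=\max A_0$ rather than an arbitrary element --- this extremality is used repeatedly in the monotonicity claims; (ii) rewriting $\mathbf{D}^*(\ldots,A_\ell,\ldots;A_0')$ through a telescoping decomposition (6.2)--(6.5) over which element $i_j'$ of $A_0'$ is the first to take the value $\ell$, producing one positive leading term $L_\ell^*(A_\ell)\,\mathbf{D}^*_{(-\ell)}(\cdots;A_0')$ plus a family of strictly negative correction terms $g(\ldots;i_j')$ whose sign is certified by the inductive hypothesis; (iii) showing each ratio $g(\ldots,A_\ell,\ldots;i_j')/g(\ldots,A_\ell\cup\{i^*\},\ldots;i_j')$ is at least $L_\ell^*(A_\ell)/L_\ell^*(A_\ell\cup\{i^*\})$ and is suitably monotone in $j$; and (iv) invoking the mediant-type inequality of Lemma 6.1 (i) to transfer the termwise bounds to the alternating sum. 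Your sketch gives no mechanism for controlling the cancellation of the $(-1)^{|B|}$ signs; a naive termwise comparison fails for exactly the reason you yourself flag. So the proposal identifies the correct target inequality but leaves the essential step unproved, which is a genuine gap rather than a stylistic difference.
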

The next theorem shows that stochastic process
  $\{X_i, i\in \mathbb{N}\}$  defined with probability (2.2-2.4) is stationary, and it has long-range dependence if $\max\{H_k, k\in S^0\}> 1/2.$ Also, indicator sequence of each state is stationary, and it has long-range dependence if its Hurst exponent is greater than 1/2.
\begin{thm}
$\{X_i, i\in \mathbb{N}\}$ is a stationary process with the following properties.\\
i.  \[ P(X_i=k)=p_k, \text{    for }k\in S^0. \] 
ii. 
\[  cov(I_{\{X_i=k}\}, I_{\{X_j=k\}})=p_k c_k |i-j|^{2H_k-2}, \text{    for }k\in S^0,\] and 
\[  cov(I_{\{X_i=0\}}, I_{\{X_j=0\}})\sim p_{k'} c_{k'} |i-j|^{2H_{k'}-2}, \text{    as } |i-j|\rightarrow \infty\]
where $k'=argmax_k H_k.$ \\
iii. \[cov(X_i, X_j)=\sum_{k=1}^m k^2 p_kc_k|i-j|^{2H_k-2}, \text{    for } i\neq j.\]
\end{thm}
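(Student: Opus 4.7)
My plan is to extract the three claims essentially from the formulas (2.2)--(2.4) that already defined the joint distributions, and from the two-point computations carried out in the text. The nontrivial part is the first sentence, namely stationarity; the covariance identities then follow by direct calculation.

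\emph{Step 1: Shift-invariance of $L^*$ and ${\bf D}^*$.} For a finite set $A=\{i_0<i_1<\cdots <i_n\}\subset\mathbb N$ and any integer shift $t\ge 0$, the definition
$L_{H,p,c}^*(A)=p\prod_{j=1}^{n}(p+c|i_j-i_{j-1}|^{2H-2})$
depends on $A$ only through the consecutive gaps $i_j-i_{j-1}$, so $L_{H,p,c}^*(A+t)=L_{H,p,c}^*(A)$. Consequently ${\bf L}^*$ is shift-invariant coordinate-wise, and since ${\bf D}^*(A_1,\dots,A_m;A_0)$ is assembled from ${\bf L}^*$ by the inclusion-exclusion sum over ordered set-partitions of subsets $B\subset A_0$, it is also shift-invariant. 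Thus the defining probabilities (2.2)--(2.4) depend only on the relative positions of the indices. Well-definedness (the Kolmogorov consistency conditions are implicit in (2.1) via the inclusion-exclusion identity plus the positivity furnished by Proposition~2.2) therefore gives a stationary process.

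\emph{Step 2: Marginal probabilities.} Taking $A_k=\{i\}$ as a singleton in (2.2) gives $P(X_i=k)=L_k^*(\{i\})=p_k$ for $k\in S^0$, which is claim (i); and $P(X_i=0)=1-\sum_{k\in S^0}p_k=p_0$ follows.

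\emph{Step 3: Pair probabilities and indicator covariances.} For $k\in S^0$, (2.2) with $A_k=\{i,j\}$ yields $P(X_i=k,X_j=k)=p_k(p_k+c_k|j-i|^{2H_k-2})$, hence $\mathrm{cov}(I_{\{X_i=k\}},I_{\{X_j=k\}})=p_k c_k |j-i|^{2H_k-2}$. For $k\neq k'$ both in $S^0$, (2.3) with $A_k=\{i\}$, $A_{k'}=\{j\}$ gives $P(X_i=k,X_j=k')=p_kp_{k'}$, so the covariance is zero. For the state $0$, summing over $k,k'\in S^0$,
\begin{align*}
P(X_i=0,X_j=0)
&=1-\sum_{k\in S^0}P(X_i=k)-\sum_{k\in S^0}P(X_j=k)+\sum_{k,k'\in S^0}P(X_i=k,X_j=k')\\
&=p_0^{2}+\sum_{k=1}^{m}p_kc_k|i-j|^{2H_k-2},
\end{align*}
so $\mathrm{cov}(I_{\{X_i=0\}},I_{\{X_j=0\}})=\sum_{k=1}^{m}p_kc_k|i-j|^{2H_k-2}$. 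As $|i-j|\to\infty$, the term with $k=k'=\arg\max_k H_k$ dominates, giving the asymptotic in (ii).

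\emph{Step 4: Covariance of $X_i,X_j$.} Write $E(X_iX_j)=\sum_{k,k'\in S^0}kk'\,P(X_i=k,X_j=k')$ and $E(X_i)E(X_j)=\bigl(\sum_k kp_k\bigr)^2=\sum_{k,k'}kk'p_kp_{k'}$. Using Step~3, the difference is $\sum_{k\in S^0}k^2 p_k c_k|i-j|^{2H_k-2}$, proving (iii).

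\emph{Main obstacle.} Almost every identity is a bookkeeping exercise once Proposition~2.2 is granted; the only conceptually nontrivial point is making the stationarity argument airtight. Concretely, one must confirm that the definition (2.2)--(2.4) actually determines a consistent family of finite-dimensional distributions (so that $X_i$ exists), because the three displays overlap and must be mutually compatible. The compatibility is exactly the identity
$\sum_{k=0}^{m} {\bf D}^*(A_1,\dots,A_k\cup\{i_0\},\dots,A_m;A_0\setminus\{i_0\})={\bf D}^*(A_1,\dots,A_m;A_0)$,
which in turn reduces to the inclusion-exclusion built into the definition of ${\bf D}^*$ and is the reason the operator was defined in that particular form. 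Verifying this identity (together with the positivity from Proposition~2.2 and the shift-invariance of Step~1) is the heart of the argument.
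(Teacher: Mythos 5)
Your proposal is correct and follows essentially the same route as the paper: positivity from Proposition 2.2 makes the family (2.2)--(2.4) a well-defined, shift-invariant (hence stationary) system of finite-dimensional distributions, and claims (i)--(iii) are exactly the two-point computations recorded in (2.5)--(2.8). One small slip: the consistency identity you state at the end should read $\mathbf{D}^*(A_1,\dots,A_m;A_0)+\sum_{k=1}^m \mathbf{D}^*(\dots,A_k\cup\{i_0\},\dots;A_0\setminus\{i_0\})=\mathbf{D}^*(A_1,\dots,A_m;A_0\setminus\{i_0\})$ --- your right-hand side keeps $i_0$ assigned to state $0$ rather than marginalized out --- which is precisely the recursion the paper displays just after Lemma 6.2.
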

\begin{proof}
By Proposition 2.2, $\{X_i\}$ is well defined stationary process with probability (2.2-2.4). The other results follow by (2.5-2.8).
\end{proof}
\section{Tail behavior of inter-arrival time and other properties}

For $k\in S^0,$ $\{I_{\{X_i=k\}}\}_{i\in \mathbb{N}}$ is stationary process in which the event $\{X_i=k\} $   is recurrent, persistent, and aperiodic (Here, we follow  the terminology and definition in \cite{fel}). Define a random variable $T_{kk}^i $ as the inter-arrival time between the i-th $``k"$ from the previous $``k"$, i.e. \[T_{kk}^i:=\inf \{i>0: X_{i+T_{kk}^{i-1}}=k \} ,\]
with $T_{kk}^0:=0.$
Since 
 $\{I_{\{X_i=k\}}\}_{i\in \mathbb{N}}$ is GBP with parameters $(H_k, p_k, c_k)$  for $k\in S^0,$  $T_{kk}^2, T_{kk}^3,\cdots$ are iid (see page 9 \cite{Lee2}). Therefore, we will denote the inter-arrival time between two consecutive observations of $k$ as $T_{kk}.$ The next Lemma is directly obtained from Theorem 3.6 in \cite{Lee2}.

\begin{lem}
For $k\in S^0,$
the inter-arrival time for state $k$, $T_{kk},$ satisfies the following. \\ {\it i.} $T_{kk}$  has mean of $1/p_k$. It has infinite second moment if $H_k \in (1/2,1).$
\\ {\it ii.}
\[ P(T_{kk}>t)=t^{2H_k-3} L_k(t), \]  where $L_k$ is a slowly varying function that depends on the parameter $H_k, p_k, c_k$.
\end{lem}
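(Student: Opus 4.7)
The plan is to reduce Lemma~3.1 to the known inter-arrival asymptotics for the generalized Bernoulli process (GBP) of \cite{Lee2}, using the identification made just after (2.8): the indicator sequence $\{I_{\{X_i=k\}}\}_{i\in\mathbb{N}}$ is itself a GBP with parameters $(H_k,p_k,c_k)$, because the joint hitting probability $P(\cap_{i\in A}\{X_i=k\})=L_k^*(A)$ in (2.2) matches the GBP definition verbatim and, from the point of view of a single state $k$, state $0$ together with the other $m-1$ non-zero states collapses into one ``failure'' symbol. Since $T_{kk}$ is by definition the inter-arrival time between consecutive $k$'s, it is exactly the inter-arrival time of this GBP, and Lemma~3.1 transcribes directly from Theorem~3.6 of \cite{Lee2}.

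For part (i), one does not even need the external citation. Stationarity of $\{I_{\{X_i=k\}}\}$ together with the i.i.d.\ property of $T_{kk}^2,T_{kk}^3,\ldots$ (noted just before the lemma) implies, by Kac's return-time formula or equivalently the elementary renewal theorem applied to the embedded point process, that $1/E[T_{kk}]=P(X_1=k)=p_k$. The infinite second moment in the long-memory regime $H_k\in(1/2,1)$ is then an immediate consequence of part (ii): writing $E[T_{kk}^2]=2\int_0^\infty t\,P(T_{kk}>t)\,dt$ and substituting $P(T_{kk}>t)\sim t^{2H_k-3}L_k(t)$ gives an integrand behaving like $t^{2H_k-2}L_k(t)$, which is non-integrable at infinity exactly when $2H_k-2\geq -1$, i.e.\ $H_k\geq 1/2$.

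For part (ii), I would start from the Palm identity $p_k\,P(T_{kk}>t)=P(X_0=k,X_1\neq k,\ldots,X_t\neq k)$ and expand the right-hand side by inclusion--exclusion over the subsets of positions where $\{X_i=k\}$ is forced:
\[
P(X_0=k,\,X_i\neq k\text{ for }1\leq i\leq t)=\sum_{B\subseteq\{1,\ldots,t\}}(-1)^{|B|}L_k^*(B\cup\{0\}).
\]
Substituting the explicit product form of $L_k^*$ and separating the leading geometric contribution $p_k(1-p_k)^t$ from the long-range correction factors $c_k|i_j-i_{j-1}|^{2H_k-2}$ produces an alternating sum whose dominant asymptotics is of the form $p_k\cdot t^{2H_k-3}L_k(t)$ with $L_k$ slowly varying and absorbing the dependence on $(H_k,p_k,c_k)$.

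The main obstacle is extracting this precise regular-variation exponent $2H_k-3$ from the alternating inclusion--exclusion sum: the required cancellations are delicate, analogous to those used for heavy-tailed renewal problems, and in particular one must show that the subleading geometric terms do not swamp the polynomial correction. However, because the indicator of a single state $k$ is exactly a two-state GBP, this analysis is identical to the one already carried out in \cite{LEE,Lee2}, so once the identification in the first step is established the present lemma follows by direct citation rather than a fresh computation.
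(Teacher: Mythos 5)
Your proposal matches the paper's own argument: the paper proves Lemma~3.1 simply by noting that $\{I_{\{X_i=k\}}\}_{i\in\mathbb{N}}$ is a generalized Bernoulli process with parameters $(H_k,p_k,c_k)$ and citing Theorem~3.6 of \cite{Lee2}, which is exactly the reduction you make. The additional sketches (Kac's formula for the mean, the inclusion--exclusion expansion of the tail) are consistent supplementary detail but not needed once the GBP identification is in place.
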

The first result {\it i} in Lemma 3.1 is similar to Lemma 1 in \cite{CAR2}. But here, we have finite-state stationary process, whereas countable-state space Markov chain was assumed in \cite{CAR2}. 
Now, we investigate the conditional probabilities and the uniqueness of our process.
\begin{thm}
Let $A_0, A_1,\cdots,A_m$ be disjoint subsets of $\mathbb{N}.$ For any $\ell\in S^0$ such that  $\max A_{\ell}>\max A_0,$ and
for $i' \notin \cup_{k=0}^{m} A_k$ such that $i' >\max A_{\ell},$ the conditional probability satisfies the following:
 \begin{align*}
P(X_{i'}=\ell| \cap_{k=0,\cdots,m} \cap_{i\in A_k} \{ X_i=k\})
=p_{\ell}+c_{\ell}|i'-\max A_{\ell}|^{2H_{\ell}-2}.
 \end{align*} If there has been no interruption of ``0" after the last observation of ``$\ell$", then the chance to observe ``$\ell$" depends on the distance between the current time and the last time of observation of ``$\ell$", regardless of how other states appeared in the past. This can be considered as a generalized Markov property. Moreover, this chance to observe $``\ell"$ decreases as the distance increases, following the power law with exponent $2H_{\ell}-2$.  
\begin{proof}
The result follows from the fact that \begin{align*}
P(\{X_{i'}=\ell\}  \cap_{\substack{i\in A_k\\ k\in S^0}} \{ X_i=k\})
=P(  \cap_{i\in A_k, k\in S^0} \{ X_i=k\})\times(p_{\ell}+c_{\ell}|i'-\max A_{\ell}|^{2H_{\ell}-2}),
 \end{align*}  since there is no $i \in A_0$ between $i'$ and $\max A_{\ell}.$

\end{proof}
\end{thm}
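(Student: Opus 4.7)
My approach is to apply the definition of conditional probability together with formula (2.4). Let $E := \bigcap_{k=0}^{m}\bigcap_{i\in A_k}\{X_i = k\}$, so that $P(E) = \mathbf{D}^*(A_1,\ldots,A_m;A_0)$. Since $i'$ is disjoint from $\bigcup_k A_k$, the joint event $\{X_{i'}=\ell\}\cap E$ is of the same form but with $A_\ell$ replaced by $A_\ell\cup\{i'\}$ (and $A_0$ unchanged), so (2.4) again gives
\[ P(\{X_{i'}=\ell\}\cap E) = \mathbf{D}^*(A_1,\ldots,A_\ell\cup\{i'\},\ldots,A_m;A_0). \]
The theorem thus reduces to showing that the ratio of these two $\mathbf{D}^*$ values equals $p_\ell + c_\ell |i' - \max A_\ell|^{2H_\ell - 2}$.

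Next I would expand both numerator and denominator using the inclusion-exclusion definition of $\mathbf{D}^*$. Each summand has the form $\prod_{k=1}^{m} L_k^*(A_k\cup B_k)$, indexed by a subset $B\subset A_0$ and a partition $(B_1,\ldots,B_m)$ of $B$. Replacing $A_\ell$ by $A_\ell\cup\{i'\}$ alters only the $k=\ell$ factor, so it is enough to compare $L_\ell^*(A_\ell\cup B_\ell\cup\{i'\})$ with $L_\ell^*(A_\ell\cup B_\ell)$ for each admissible $B_\ell\subset A_0$.

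The hypothesis $\max A_\ell > \max A_0$ enters decisively at this point. Since $B_\ell\subset A_0$, we have $\max B_\ell \le \max A_0 < \max A_\ell$, and by assumption $i' > \max A_\ell$, so $i'$ is strictly larger than every element of $A_\ell\cup B_\ell$ and $\max(A_\ell\cup B_\ell)=\max A_\ell$ regardless of $B_\ell$. By the telescoping product form of $L_\ell^*$ along the sorted sequence, appending this new maximum $i'$ contributes exactly one additional factor,
\[ L_\ell^*(A_\ell\cup B_\ell\cup\{i'\}) = L_\ell^*(A_\ell\cup B_\ell)\cdot\bigl(p_\ell + c_\ell |i' - \max A_\ell|^{2H_\ell - 2}\bigr), \]
and this multiplicative factor does not depend on $B_\ell$. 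Pulling this common constant out of every summand of the inclusion-exclusion for the numerator yields exactly the denominator times $p_\ell + c_\ell |i' - \max A_\ell|^{2H_\ell - 2}$, which is the claimed identity. The only subtle step is the uniformity of the factor across all summands; this is precisely what $\max A_\ell > \max A_0$ guarantees, and without it $\max(A_\ell\cup B_\ell)$ could vary with $B_\ell$ and no clean factorization would emerge.
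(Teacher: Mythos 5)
Your proof is correct and takes essentially the same route as the paper's: both rest on the observation that, since no element of $A_0$ (hence of any $B_\ell\subset A_0$ in the inclusion--exclusion expansion) lies between $\max A_\ell$ and $i'$, the telescoping factor $p_\ell+c_\ell|i'-\max A_\ell|^{2H_\ell-2}$ is the same in every summand and pulls out of ${\bf D}^*$. Your write-up is in fact slightly more explicit than the paper's one-line proof, which displays the factorization only at the level of ${\bf L}^*$ and leaves the uniformity over the subsets of $A_0$ implicit.
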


In a countable state space Markov chain, long-range dependence is possible only when it has infinite-state space, and additionally if it is stationary,
positive recurrent, irreducible, aperiodic Markov chain, then each state should have the same long-term memory, i.e., sequence indicator have the same Hurst exponent for all states \cite{CAR2}. By relaxing Markov property, long-range dependence was made possible in a finite-state stationary process, also with different Hurst parameter for different states.

\begin{thm}
Let $A_0, A_1, \cdots, A_m$ be disjoint subsets of $\mathbb{N}.$
 For $\ell \in S^0$ such that $\max A_{\ell }<\max A_0$,
and  $i_1',i_2',i_3' \notin \cup_{k=0}^{m}A_k$ such that $i_1',i_2',i_3'>\max A_0,$ and $ i_2'>i_3',$ the conditional probability satisfies the following:
\\ a. 
\begin{equation*}
  p_{\ell}+ c_{\ell} |i_1'-\max A_{\ell}|^{2H_{\ell}-2}> P( X_{i_1'}=\ell|  \cap_{i\in A_k, k\in S^0} \{ X_i=k\} ).\end{equation*}
b. 
\begin{equation*}
   \frac{ P( X_{i_2'}=\ell|  \cap_{i\in A_k, k\in S^0} \{ X_i=k\})
}{    P( X_{i_3'}=\ell|  \cap_{i\in A_k, k\in S^0} \{ X_i=k\} )} >\frac{p_{\ell}+c_{\ell}|i_2'-\max A_{\ell}|^{2H_{\ell}-2}}{p_{\ell}+c_{\ell}|i_3'-\max A_{\ell}|^{2H_{\ell}-2}} \end{equation*}

\end{thm}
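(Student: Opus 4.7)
The plan is to reduce both parts to a strict negative-association claim and then attack that by induction. Set
\[
B:=\bigcap_{k\in S^0}\bigcap_{i\in A_k}\{X_i=k\},\qquad B_0:=\bigcap_{i\in A_0}\{X_i=0\},
\]
and $t_\ell:=\max A_\ell$. Bayes' rule,
\[
P(X_{i_1'}=\ell\mid B\cap B_0)=P(X_{i_1'}=\ell\mid B)\cdot\frac{P(B_0\mid B,X_{i_1'}=\ell)}{P(B_0\mid B)},
\]
combined with Theorem~3.2 applied with the zero-set taken empty (the hypothesis is vacuous, and $i_1'>t_\ell$ holds), gives $P(X_{i_1'}=\ell\mid B)=p_\ell+c_\ell|i_1'-t_\ell|^{2H_\ell-2}$. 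Part~(a) then amounts to $P(B_0\mid B,X_{i_1'}=\ell)<P(B_0\mid B)$. The same decomposition reduces part~(b) to showing $\gamma_{i_2'}>\gamma_{i_3'}$, where $\gamma_i:=P(B_0\mid B,X_i=\ell)/P(B_0\mid B)$, i.e., the suppressive effect of $B_0$ weakens as $i$ moves farther from $\max A_0$.

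I would first split $A_0=A_0^-\sqcup A_0^+$ with $A_0^-:=A_0\cap(-\infty,t_\ell)$ and $A_0^+:=A_0\cap(t_\ell,\infty)$; the hypothesis $t_\ell<\max A_0$ forces $A_0^+\ne\emptyset$. A second use of Theorem~3.2, now with zero-set $A_0^-$ (whose max lies below $t_\ell$), yields $P(X_{i_1'}=\ell\mid B\cap B_0^-)=p_\ell+c_\ell|i_1'-t_\ell|^{2H_\ell-2}$, and Bayes then gives $P(B_0^-\mid B,X_{i_1'}=\ell)=P(B_0^-\mid B)$. Consequently it suffices to prove
\[
P(B_0^+\mid B\cap B_0^-,X_{i_1'}=\ell)<P(B_0^+\mid B\cap B_0^-),
\]
i.e., to settle the case where every zero in $A_0$ lies strictly between $t_\ell$ and $i_1'$.

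Next I would induct on $|A_0^+|$. For the base case $A_0^+=\{t_0\}$, write $P(X_{t_0}=0\mid\cdot)=1-\sum_{k\in S^0}P(X_{t_0}=k\mid\cdot)$ in both settings and expand via the ${\bf D}^*$-formula of Proposition~2.2. Because $i_1'$ exceeds every element appearing in the expansion, $L_\ell^*(A_\ell\cup\{i_1'\}\cup C_\ell)=(p_\ell+c_\ell|i_1'-t_\ell|^{2H_\ell-2})\,L_\ell^*(A_\ell\cup C_\ell)$ for every $C\subset A_0^-$; this pulls a common factor out of the alternating sum and yields $P(X_{t_0}=k\mid B\cap B_0^-,X_{i_1'}=\ell)=P(X_{t_0}=k\mid B\cap B_0^-)$ for $k\in S^0\setminus\{\ell\}$, together with
\[
\frac{P(X_{t_0}=\ell\mid B\cap B_0^-,X_{i_1'}=\ell)}{P(X_{t_0}=\ell\mid B\cap B_0^-)}=\frac{p_\ell+c_\ell|i_1'-t_0|^{2H_\ell-2}}{p_\ell+c_\ell|i_1'-t_\ell|^{2H_\ell-2}}>1,
\]
using $t_\ell<t_0<i_1'$ and the monotonicity of $x\mapsto x^{2H_\ell-2}$. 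Summing, $P(X_{t_0}=0\mid\cdot)$ strictly decreases, proving (a) in the base case. The part (b) base case follows from a one-variable check that $h(i):=(p_\ell+c_\ell|i-t_0|^{2H_\ell-2})/(p_\ell+c_\ell|i-t_\ell|^{2H_\ell-2})$ is strictly decreasing on $(t_0,\infty)$, whence $\gamma_i$ is strictly increasing in $i>t_0$.

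The hard part is the inductive step $|A_0^+|\ge 2$. Peeling off $t_0:=\max A_0^+$ and writing $P(B_0^+\mid\cdot)=P(B_0^+\setminus\{X_{t_0}=0\}\mid\cdot)\cdot P(X_{t_0}=0\mid\cdot,\text{rest})$, the inductive hypothesis disposes of the first factor. The second factor is subtler: when one attempts the base-case $L_\ell^*$-factorization, the "common factor" $(p_\ell+c_\ell|i_1'-\max(A_\ell\cup C_\ell)|^{2H_\ell-2})$ now depends on $C$, because $C_\ell$ may include elements of $A_0^+\cap(t_\ell,t_0)$ which shift $\max(A_\ell\cup C_\ell)$ above $t_\ell$. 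My plan is to regroup the inclusion-exclusion by the value of $\max(A_\ell\cup C_\ell)$ and compare the two ${\bf D}^*$-sums block by block; the key auxiliary estimate is the monotonicity of $(p_\ell+c_\ell|i_1'-u|^{2H_\ell-2})$ in $u$, combined with the fact that each block (with $\max(A_\ell\cup C_\ell)$ fixed) is itself a non-negative quantity of ${\bf D}^*$-type by Proposition~2.2. Handling this bookkeeping inside the alternating-sign sum is the technical heart of the argument.
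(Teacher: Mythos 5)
Your overall strategy --- Bayes' rule plus Theorem~3.2 to peel off the zeros lying below $\max A_\ell$, reduction to a negative-association statement $P(B_0^+\mid B\cap B_0^-, X_{i'}=\ell)<P(B_0^+\mid B\cap B_0^-)$, and induction on the number of zeros between $\max A_\ell$ and $i'$ --- is a genuinely different route from the paper, and your base case $|A_0^+|=1$ is correct: the common factor $(p_\ell+c_\ell|i_1'-\max A_\ell|^{2H_\ell-2})$ does pull out of every term of the ${\bf D}^*$ expansion when $C_\ell\subset A_0^-$, the $k=\ell$ ratio is strictly greater than $1$, and your function $h$ is indeed strictly decreasing. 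However, the proposal has a genuine gap exactly where you flag it: the inductive step is a plan, not a proof. Once $|A_0^+|\ge 2$, the second factor $P(X_{t_0}=0\mid B\cap B_0^-\cap\tilde B_0^+,\,X_{i'}=\ell)$ is conditioned on zeros sitting strictly between $\max A_\ell$ and $t_0$, so in the expansion over $C\subset A_0^+\setminus\{t_0\}$ the factor $(p_\ell+c_\ell|i'-\max(A_\ell\cup C_\ell)|^{2H_\ell-2})$ genuinely varies with $C$, the $k\ne\ell$ terms no longer cancel, and your proposed ``block by block'' regrouping by the value of $\max(A_\ell\cup C_\ell)$ is not justified: it is not shown (and not obvious) that each fixed-maximum block of the alternating sum is a nonnegative quantity of ${\bf D}^*$-type, nor that the blockwise ratios are ordered in the way a mediant argument would require. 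Since both parts (a) and (b) rest on this step, the proof is incomplete as written.

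For comparison, the paper avoids induction entirely. It writes the conditional probability as a ratio of ${\bf D}^*$'s and uses the recursion (6.5)--(6.6) to express each as a positive leading term $L_\ell^*(\cdot)\,{\bf D}^*_{(-\ell)}(\cdots;A_0)$ minus a sum of positive quantities $-g(\cdots;i_j)$; it then proves the termwise ratio bounds $L_\ell^*(A_\ell\cup\{i_1'\})/L_\ell^*(A_\ell)\le g(\ldots,A_\ell\cup\{i_1'\},\ldots;i_j)/g(\ldots,A_\ell,\ldots;i_j)$ together with monotonicity of these ratios in $j$ (equations (6.7)--(6.9)), and concludes with the signed mediant inequality of Lemma~6.1 (parts i and ii). That machinery is precisely what resolves the ``varying common factor'' difficulty you ran into; if you want to salvage your induction, you would essentially need to import those same comparisons, at which point the direct argument is shorter. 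I would recommend either completing the inductive step with a full justification of the block decomposition, or adopting the termwise-ratio-plus-Lemma-6.1 approach.
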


\begin{thm}
A stationary process with (2.2-2.4) is the unique stationary process that satisfies\\
i. for $k\in S$ \[P(X_i=k)=p_k, \hspace{10pt} \text{ where } p_k >0 \text{ and } \sum_{k=0}^{m} p_k=1, \]\\
ii. for   $k\in S^0$ and any $ i,j \in \mathbb{N}, i\neq j,$  \[Cov(I_{\{X_i=k\}}, I_{\{X_j=k\}})=c'_k|i-j|^{2H_k-2}, \]
 for some constants
 $c'_k\in \mathbb{R}_+, H_k \in(0,1),$
\\ iii. for any sets, $A\subset S^0$ and $\{i_k; k\in A\}\subset \mathbb{N},$
\[P(\cap_{k\in A} \{X_{i_k}=k\})=\prod_{k\in A} p_k,\]
\\
iv. for $\ell\in S^0,$ there is a function $h_{\ell}(\cdot)$ such that
\begin{align*}
P(X_{i'}=\ell|  \cap_{i\in A_k, k\in S^0} \{ X_i=k\})
=h_{\ell}(i'-\max A_{\ell})
 \end{align*}
for disjoint subsets,  $A_0, A_1,\cdots,A_m, \{i'\} \subset \mathbb{N}$, such that  $A_{\ell}\neq \emptyset,$ $i' >\max A_{\ell},$ and  $ \max A_{\ell}>\max A_0$ ($A_0$ can be the empty set).

\end{thm}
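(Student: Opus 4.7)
The plan is to deduce that any stationary process satisfying (i)--(iv) must assign the finite-dimensional probabilities prescribed by (2.2)--(2.4); uniqueness then follows, since every cylinder event of the form $\{X_{j_1}=s_1,\ldots,X_{j_N}=s_N\}$ can be written as $\cap_{k=0}^m \cap_{i \in A_k}\{X_i=k\}$ with $A_k:=\{j_\tau : s_\tau = k\}$. My first step is to identify $h_\ell$ in (iv). Applying (iv) with $A_0=\emptyset$, $A_\ell=\{i_0\}$, and $A_k=\emptyset$ for $k \in S^0\setminus\{\ell\}$ gives $P(X_{i'}=\ell \mid X_{i_0}=\ell)=h_\ell(i'-i_0)$, whereas (i) and (ii) compute this conditional probability directly as $p_\ell+(c'_\ell/p_\ell)\,|i'-i_0|^{2H_\ell-2}$. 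Setting $c_\ell:=c'_\ell/p_\ell$, this yields $h_\ell(d)=p_\ell+c_\ell d^{2H_\ell-2}$, which matches exactly the factor appearing in $L^*_\ell$.

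The second and main step is to prove (2.3) by induction on $N:=\sum_{k=1}^m |A_k|$. The cases $N\le 1$ are immediate from (i). For $N\ge 2$, if every $|A_k|\le 1$ then the event involves only distinct states in $S^0$ at distinct times and (iii) delivers $\prod_k p_k=\prod_k L^*_k(A_k)$. Otherwise some $|A_\ell|\ge 2$; set $j^*:=\max A_\ell$, $A_\ell^-:=A_\ell\setminus\{j^*\}$, and $A_k^-:=A_k$ for $k\ne\ell$. Applying (iv) with $A_0=\emptyset$ and $i'=j^*$ -- legitimate because $A_\ell^-\ne\emptyset$ and $j^*>\max A_\ell^-$, while (iv) imposes no relation between $j^*$ and the other $A_k$ -- gives
\begin{equation*}
P(X_{j^*}=\ell \mid \cap_{k\in S^0,\,i\in A_k^-}\{X_i=k\}) = p_\ell + c_\ell\,(j^*-\max A_\ell^-)^{2H_\ell-2}.
\end{equation*}
Multiplying this by the induction hypothesis $\prod_k L^*_k(A_k^-)$ and using the recursive factorization $L^*_\ell(A_\ell)=L^*_\ell(A_\ell^-)\cdot(p_\ell+c_\ell(j^*-\max A_\ell^-)^{2H_\ell-2})$ built into Definition 2.1 completes the induction and yields (2.3).

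The third step derives (2.4) from (2.3) by inclusion--exclusion using $I_{\{X_i=0\}}=1-\sum_{k\in S^0}I_{\{X_i=k\}}$. Expanding $\prod_{i\in A_0}\bigl(1-\sum_{k\in S^0}I_{\{X_i=k\}}\bigr)$ and grouping terms by a subset $B\subset A_0$ together with an ordered partition $B=B_1\cup\cdots\cup B_m$ (with $B_i\cap B_j=\emptyset$) that records the state each index of $B$ gets reassigned to, every resulting expectation is a pure $S^0$-event to which (2.3) applies and produces $\prod_k L^*_k(A_k\cup B_k)$; the alternating sum then matches Definition 2.1 for ${\bf D}^*(A_1,\ldots,A_m;A_0)$ term-by-term. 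I expect the main obstacle to be getting step two airtight: one has to exploit the precise statement of (iv), which requires only $i'>\max A_\ell$ and not $i'>\max\cup_k A_k$, so that peeling off the largest occurrence of a repeated state is always a legal move even when other states have later observations in the conditioning event.
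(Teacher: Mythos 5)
Your proposal is correct and follows essentially the same route as the paper: identify $h_\ell(d)=p_\ell+(c'_\ell/p_\ell)d^{2H_\ell-2}$ from (i)--(ii), then chain (iv) together with (iii) to recover $\prod_k L^*_k(A_k)$, and pass to ${\bf D}^*$ by inclusion--exclusion. The paper states the middle and last steps tersely ("by applying iii, iv" and "this implies (2.2--2.4)"), whereas you supply the explicit induction on $\sum_k|A_k|$ and the expansion of $\prod_{i\in A_0}(1-\sum_k I_{\{X_i=k\}})$; these are faithful elaborations, not a different argument.
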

\begin{proof}
Let $X^*$ be a stationary process that satisfies $i-iv$. By $i,ii,$ \[P(X^*_{i_0}=k, X^*_{i_1}=k)=Cov(I_{\{X^*_{i_0}=k\}},I_{\{X^*_{i_1}=k\}})+p_k^2=c_k'|i_0-i_1|^{2H_k-2}+p_k^2,\] which results in 
 \[h_k(i_0-i_1)=P(X^*_{i_1}=k|X^*_{i_0}=k)= p_k+(c_k'/p_k)|i_0-i_1|^{2H_k-2}. \] Therefore, by $iv,$ \begin{align*}
     P(X^*_{i_0}=k, X^*_{i_1}=k,X^*_{i_2}=k, \cdots, X^*_{i_n}=k )&=p_k\prod_{j=1}^{n}h_k(i_j-i_{j-1}) \\&=L^*_{k}(\{i_0,i_2,\cdots, i_n\}) ,\end{align*} where $L_k^*=L_{H_k,p_k,c_k'/p_k}^*.$
 Also, by applying {\it iii,iv} to $X^*$, 
 \begin{align*}
     P( \cap_{i\in A_k, k\in S^0} \{ X_i=k\})=\prod_{k=1,\cdots, m } L_{k}^*(A_k).\end{align*}
 This implies that $X^*$ satisfies (2.2-2.4) with $c_k=c_k'/p_k$ for  $k\in S^0.$
\end{proof}
\section{Fractional multinomial distribution}
In this section, we define fractional multinomial distribution that can serve as a over-dispersed multinomial distribution.

Note that $\sum_{i=1}^{n}I_{\{X_i=k\}}$
has mean $np_k$ for $k\in S.$  Also, as $n\rightarrow \infty, $  
\[ var\Big(\sum_{i=1}^{n}I_{\{X_i=k\}}\Big)\sim  \begin{dcases}  (p_k(1-p_k)+\frac{c'_k}{2H_k-1})n&H_k\in (0, 1/2),  \\   {c'_k} {n}{\ln n } &H_k=1/2,\\ \frac{c'_k}{2H_k-1}|n|^{2H_k}, &H_k\in (1/2, 1) ,   \end{dcases}\] for $k\in S^0$, and

\[ var\Big(\sum_{i=1}^{n}I_{\{X_i=0\}}\Big)\sim  \begin{dcases}  (p_{k'}(1-p_{k'})+\frac{c'_{k'}}{2H_{k'}-1})n&H_{k'}\in (0, 1/2),  \\   {c'_{k'}} n{\ln n } &H_{k'}=1/2,\\ \frac{c'_{k'}}{2H_{k'}-1}|n|^{2H_{k'}}, &H_{k'}\in (1/2, 1) ,  \end{dcases}\] 
where $k'=argmax_k \{H_k;k\in S^0\}$, and $c'_k=p_kc_k.$
It also has the following covariance.
\[ cov\Big(\sum_{i=1}^{n}I_{\{X_i=k\}},\sum_{i=1}^{n}I_{\{X_i=k'\}}\Big)=-np_kp_{k'},\] 
\[ cov\Big(\sum_{i=1}^{n}I_{\{X_i=0\}},\sum_{i=1}^{n}I_{\{X_i=k\}}\Big)=-np_0p_{k}-\sum_{\substack{ i\neq j \\i,j=1,\cdots, n }}c'_k|i-j|^{2H_k-2},\]
for $k, k' \in S^0.$

We define $Y_k:=\sum_{i=1}^n I_{\{X_i=k\}} ,$ for $k\in S,$ and a fixed $n$, and call its distribution fractional multinomial distribution with parameters $n, {\bf p}, {\bf H}, {\bf c}.$

If ${\bf c}={\bf 0}$, $(Y_0, Y_1,Y_2,\cdots,Y_m )$ follows multinomial distribution with parameters $n,{\bf p},$ and
$E(Y_k)=np_k, var(Y_k)=np_k(1-p_k), cov(Y_k, Y_{k'})=-np_kp_{k'},$ for $k,k'\in S, k\neq k',$ and $p_0=1-\sum_{i=1}^m p_i.$

If ${\bf c}\neq {\bf  0},$ $(Y_0,Y_1,\cdots,Y_m)$ can serve as  over-dispersed multinomial random variables with \[ E(Y_k)=np_k, \hspace{10pt} Var(Y_k)=np_k(1-p_k)(1+\psi_{n,k}) ,\]  where the over-dispersion parameter $\psi_{n,k}$ is as follows. \[\psi_{n,k}\sim\begin{dcases}  \frac{c}{(1-p_k)(2H_k-1)} & \text{ if } H_k\in (0,1/2),\\
\frac{c \ln n}{1-p_k}-1 &  \text{ if } H_k=1/2,\\ \frac{c n^{2H_k-1}}{(1-p_k)2H_k-1}-1& \text{ if } H_k\in (1/2,1),\end{dcases}\]
for $k\in S^0,$ and 
 \[\psi_{n,0}\sim\begin{dcases}  \frac{c}{(1-p_{k'})(2H_{k'}-1)} & \text{ if } H_{k'}\in (0,1/2),\\
\frac{c \ln n}{1-p_{k'}}-1 & \text{ if } H_{k'}=1/2,\\ \frac{c n^{2H_{k'}-1}}{(1-p_{k'})2H_{k'}-1}-1& \text{ if }H_{k'}\in (1/2,1),\end{dcases}\] where $k'=argmax_k \{ H_k; k\in S^0\},$
as $n\rightarrow \infty.$
If $H_k\in (0,1/2),$ the over-dispersion parameter $\psi_{n,k}$ remains stable as $n$ increases, whereas if $H_k\in (1/2,1)$ the over-dispersed parameter $\psi_{n,k}$ increases with the rate of fractional exponent of $n$, $n^{2H_k-1}.$ 

\section{Conclusion}
A new method for modeling long-range dependence in discrete-time finite-state stationary process was proposed. This model allows different states to have different Hurst indices except that for the base state ``0" the Hurst exponent is the maximum Hurst index of all other states. Inter-arrival time for each state follows heavy tail distribution, and its tail behavior is different for different states.  The other interesting feature of this process is that  the conditional probability to observe a state ``$k$" ($k$ is not the base state ``0")  depends on the Hurst index $H_k$ and the time difference between the last observation of ``$k$" and the current time, no matter how other states appeared in the past, given that there was no base state observed since the last observation of ``$k$".
From the stationary process developed in this paper, we defined fractional multinomial distribution which can express a wide range of  over-dispersed multinomial distribution; each state can have different over-dispersion parameter that can behave asymptotically constant or grow with a fractional exponent of the number of trials.


\section{Proofs}

\begin{lem} For any $\{a_0, a_1, \cdots, a_n, a_0',a_1',\cdots, a_n'\} \subset \mathbb{R}_{+}$ that satisfies
$a_0-\sum_{i=1}^{j}a_i>0, a_0'-\sum_{i=1}^{j}a_i'>0$ for $j=1,2,\cdots,n$,
\\
i. if
\[\frac{a_0}{a_0'} \geq \frac{a_1}{a_1'}\geq\cdots\geq\frac{a_n}{a_n'}, \] then \[ \frac{a_0-a_1-a_2-\cdots -a_n}{a_0'-a_1'-a_2'-\cdots -a_n'}\geq \frac{a_0}{a_0'}.\]
ii. If \[\frac{a_0}{a_0'} <\frac{a_1}{a_1'}\leq\cdots\leq\frac{a_n}{a_n'}, \] then \[ \frac{a_0-a_1-a_2-\cdots -a_n}{a_0'-a_1'-a_2'-\cdots -a_n'}\leq \frac{a_0}{a_0'}.\]
iii.   For any $\{a_0, a_1, \cdots, a_n, a_0',a_1',\cdots, a_n'\} \subset \mathbb{R}_{+}$ 
\[ \max_i \frac{a_i}{a_i'}\geq \frac{a_1+a_2+\cdots+a_n}{a_1'+a_2'+\cdots+a_n'}\geq \min_i \frac{a_i}{a_i'} .\]

\end{lem}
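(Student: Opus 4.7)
The plan is to prove part (iii) first as it is the classical mediant inequality and serves as a key tool, and then deduce parts (i) and (ii) either from (iii) or by a direct cross-multiplication argument.

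For part (iii), I would set $M := \max_i a_i/a_i'$ and $m := \min_i a_i/a_i'$. Since all $a_i'$ are positive, multiplying $m \leq a_i/a_i' \leq M$ by $a_i'$ gives $m\, a_i' \leq a_i \leq M\, a_i'$ for every $i$. Summing over $i=1,\dots,n$ and dividing by $\sum_{i=1}^n a_i' > 0$ yields the desired chain of inequalities.

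For parts (i) and (ii), I would cross-multiply. Both $a_0'$ and $a_0' - \sum_{i=1}^n a_i'$ are strictly positive by hypothesis, so the claimed inequality $(a_0 - \sum a_i)/(a_0' - \sum a_i') \geq a_0/a_0'$ is equivalent to $(a_0 - \sum a_i)\, a_0' \geq a_0 (a_0' - \sum a_i')$, which simplifies to $a_0 \sum_{i=1}^n a_i' \geq a_0' \sum_{i=1}^n a_i$, i.e., $\sum_{i=1}^n (a_0 a_i' - a_0' a_i) \geq 0$. Under the hypothesis $a_0/a_0' \geq a_i/a_i'$ of (i), each summand is nonnegative, so the conclusion follows. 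The proof of (ii) is identical with inequalities reversed, using $a_0/a_0' \leq a_i/a_i'$ to make each $a_0 a_i' - a_0' a_i \leq 0$.

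Alternatively, one can derive (i) from (iii) by writing $a_0 = (a_0 - \sum a_i) + \sum a_i$ and $a_0' = (a_0' - \sum a_i') + \sum a_i'$, so that $a_0/a_0'$ is the mediant of $(a_0 - \sum a_i)/(a_0' - \sum a_i')$ and $\sum a_i/\sum a_i'$; by (iii) the ordering among these three values is forced by the assumption, giving the claim. There is essentially no obstacle here; the only minor care needed is to verify that the denominators remain strictly positive throughout, which is guaranteed by the standing hypothesis $a_0' - \sum_{i=1}^j a_i' > 0$ for every $j$. The orderings $a_1/a_1' \geq \cdots \geq a_n/a_n'$ and $a_1/a_1' \leq \cdots \leq a_n/a_n'$ in (i) and (ii) are not actually used beyond the comparison of each ratio to $a_0/a_0'$, so the direct cross-multiplication proof is the cleanest route.
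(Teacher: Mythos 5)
Your proof is correct. For part \emph{iii} your argument coincides with the paper's: the paper writes $(a_1+\cdots+a_n)/(a_1'+\cdots+a_n')$ as the weighted average $\sum_j b_j a_j'/\sum_j a_j'$ with $b_j=a_j/a_j'$, which is exactly your observation that $m\,a_i'\leq a_i\leq M\,a_i'$ summed over $i$. For parts \emph{i} and \emph{ii}, however, the paper gives no argument at all --- it simply cites Lemma 5.2 of \cite{LEE} --- whereas you supply a self-contained proof by cross-multiplication: since $a_0'>0$ and $a_0'-\sum_{i=1}^{n}a_i'>0$, the claimed inequality is equivalent to $\sum_{i=1}^{n}(a_0a_i'-a_0'a_i)\geq 0$, and each summand is nonnegative under the hypothesis of \emph{i} (nonpositive under \emph{ii}). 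This is shorter than chasing the cited lemma and has the added benefit of exposing that the monotone ordering $a_1/a_1'\geq\cdots\geq a_n/a_n'$ is never needed --- only the comparison of each ratio with $a_0/a_0'$ matters --- so your version is slightly more general than the stated hypotheses. The one point requiring care, which you do address, is that both denominators are strictly positive so that cross-multiplication preserves the direction of the inequality; your alternative derivation of \emph{i} from \emph{iii} via the mediant decomposition $a_0=(a_0-\sum a_i)+\sum a_i$ is also valid but unnecessary given how clean the direct route is.
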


\begin{proof}

{\it i} and  {\it ii} were proved in Lemma 5.2 in \cite{LEE}.
\\For {\it iii}, 
define $b_j$ such that  \[\frac{a_j}{a_j'}= b_j.\] Then 
\[\frac{a_1+a_2+\cdots+a_n}{a_1'+a_2'+\cdots+a_n'}= \frac{b_1a_1'+b_2a_2'+\cdots+b_na_n'}{a_1'+a_2'+\cdots+a_n'}  \] which is weighted average of $\{b_j, j=1,\cdots,n\}$. 
\end{proof}

 To ease our notation, we will denote \[{\bf L}^*(A_1,A_2,\cdots,A_{k-1}, A_k\cup\{i\}, A_{k+1}, \cdots,A_m)\]  by  \[{\bf L}^*(\cdots, A_k\cup\{i\}, \cdots) ,\] and 
 
 \[ {\bf L}^*(\cdots, A_k\cup\{i\},   A_{k'}\cup\{j\},\cdots )=  {\bf L}^*(A_1^*,A_2^*,\cdots, A_m^*)  \] where if $k\neq k'$
 \[ A_i^*=\begin{dcases}
 A_i \text{ if } i \neq k, k,'\\
 A_i\cup \{i\}   \text{ if } i = k,\\
 A_i\cup \{j\}   \text{ if } i = k',
 \end{dcases}\]
 and if $k=k'$ \[ A_i^*=\begin{dcases}
 A_i \text{ if } i \neq k, \\
 A_i\cup \{i\cup\}   \text{ if } i = k.
 \end{dcases}\]
${\bf D}^*(\cdots, A_k\cup\{i\}, \cdots)$ and $ {\bf D}^*(\cdots, A_k\cup\{i\},   A_{k'}\cup\{j\},\cdots )$ are also defined in a similar way. 
\begin{lem} For any disjoint sets $A_1,\cdots, A_m, \{i_0,i_1\}\subset \mathbb{N}$,\\
i. \[ {\bf D}^*(A_1,A_2,\cdots,A_m;\{i_0\})>0 \]
\\ ii.  \[ {\bf D}^*(A_1,A_2,\cdots,A_m;\{i_0,i_1\})>0 \]

\end{lem}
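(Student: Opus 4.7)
The plan is to push both parts through the explicit formula in Definition 2.1, collecting the alternating sum according to which $L_k^*$-ratios it produces. Set $\Lambda := \prod_{k=1}^m L_k^*(A_k)$, $a_k := L_k^*(A_k\cup\{i_0\})/L_k^*(A_k)$, $b_k := L_k^*(A_k\cup\{i_1\})/L_k^*(A_k)$, and $d_k := L_k^*(A_k\cup\{i_0,i_1\})/L_k^*(A_k)$. Part (i) is then already (2.1): ${\bf D}^*(A_1,\ldots,A_m;\{i_0\}) = \Lambda\bigl(1-\sum_k a_k\bigr)$, and Remark 2.1(b) gives $\sum_k a_k < 1$, so the product is positive. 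For part (ii) I partition the sum defining ${\bf D}^*$ by $|B|\in\{0,1,2\}$: the $|B|=1$ layer (two choices of $B$, $m$ ordered partitions each) yields $-\sum_k a_k-\sum_k b_k$ after dividing by $\Lambda$, while the $|B|=2$ layer with $B=\{i_0,i_1\}$ separates into $m$ ``together'' partitions contributing $\sum_k d_k$ and $m(m-1)$ ``separated'' partitions contributing $\sum_{k\ne k'}a_k b_{k'}$. Using $\sum_{k\ne k'} a_k b_{k'} = (\sum_k a_k)(\sum_k b_k) - \sum_k a_k b_k$, this telescopes into
\[
{\bf D}^*(A_1,\ldots,A_m;\{i_0,i_1\}) = \Lambda\Bigl[\Bigl(1-\sum_k a_k\Bigr)\Bigl(1-\sum_k b_k\Bigr) + \sum_{k=1}^m (d_k - a_k b_k)\Bigr].
\]
The first summand is strictly positive by part (i), so the proof reduces to showing $d_k \ge a_k b_k$ for every $k\in S^0$.

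This pointwise inequality splits by the location of $i_0 < i_1$ relative to $A_k = \{j_0 < \cdots < j_n\}$. If some element of $A_k$ lies strictly between $i_0$ and $i_1$, then inserting $i_0$ and inserting $i_1$ affect disjoint consecutive-difference factors of $L_k^*$, so $d_k = a_k b_k$ identically. The substantive case is when $i_0,i_1$ sit in the same slot of $A_k$ — both below $j_0$, both above $j_n$, or both strictly between a common pair $j_\ell < j_{\ell+1}$. Setting $x := i_0 - j_\ell$, $y := i_1 - i_0$, $z := j_{\ell+1} - i_1$, and $\alpha := 2H_k - 2 < 0$ (with $x$ or $z$ formally $+\infty$ in the two outer subcases, collapsing the corresponding factors to $p_k$), a direct computation of the relevant $L_k^*$ products reduces $d_k \ge a_k b_k$ to
\[
(p_k + c_k y^\alpha)\bigl(p_k + c_k (x+y+z)^\alpha\bigr) \;\ge\; \bigl(p_k + c_k (y+z)^\alpha\bigr)\bigl(p_k + c_k (x+y)^\alpha\bigr).
\]

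Expanding both sides and matching coefficients of $p_k^2$, $p_k c_k$, and $c_k^2$, this in turn reduces to two numerical facts about the exponent $\alpha<0$. First, the pair $(y,\,x+y+z)$ majorizes $(y+z,\,x+y)$ (equal sum $x+2y+z$, wider spread), so Karamata's inequality applied to the convex function $t\mapsto t^\alpha$ gives $y^\alpha + (x+y+z)^\alpha \ge (y+z)^\alpha + (x+y)^\alpha$. Second, the elementary identity $(y+z)(x+y) - y(x+y+z) = xz \ge 0$ combined with $\alpha < 0$ yields $\bigl(y(x+y+z)\bigr)^\alpha \ge \bigl((y+z)(x+y)\bigr)^\alpha$. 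The main obstacle is exactly this interior-slot subcase: $d_k$ has a three-factor numerator linked through a single shared denominator, while $a_k b_k$ has a four-factor numerator over a squared denominator, and only the convexity and monotonicity of $t\mapsto t^\alpha$ make the three-factor expression dominate. Note that Assumption (2.9) plays no role in the second summand of the decomposition; it enters only through Remark 2.1(b) to secure positivity of the first summand.
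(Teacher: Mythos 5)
Your proof is correct, but for part (ii) it takes a genuinely different route from the paper's. The paper peels off $i_1$ through the recursion ${\bf D}^*(A_1,\ldots,A_m;\{i_0,i_1\})={\bf D}^*(A_1,\ldots,A_m;\{i_0\})-\sum_{k}{\bf D}^*(\cdots,A_k\cup\{i_1\},\cdots;\{i_0\})$ and then shows the ratio of the two (individually positive) quantities exceeds $1$, via the mediant inequalities of Lemma 6.1 combined with monotonicity of ${\bf L}^*(A_1,\ldots,A_m)\big/\sum_{k'}{\bf L}^*(\cdots,A_{k'}\cup\{i_1\},\cdots)$ under enlargement of the sets $A_k$. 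You instead expand the defining alternating sum exactly (your bookkeeping of the $m$ ``together'' and $m(m-1)$ ``separated'' assignments in the $|B|=2$ layer is right) and arrive at the closed form ${\bf D}^*=\Lambda\bigl[(1-\sum_k a_k)(1-\sum_k b_k)+\sum_k(d_k-a_kb_k)\bigr]$, reducing everything to the pointwise inequality $d_k\ge a_kb_k$; your verification of that inequality is sound, including the boundary cases handled by collapsing factors (e.g. $A_k=\emptyset$ gives $d_k=p_k(p_k+c_k(i_1-i_0)^{2H_k-2})\ge p_k^2=a_kb_k$), the exact factorization $d_k=a_kb_k$ when the two insertion slots are disjoint, and the same-slot case via Karamata for the convex map $t\mapsto t^{2H_k-2}$ together with $(x+y)(y+z)-y(x+y+z)=xz\ge 0$. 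It is worth noting that $d_k\ge a_kb_k$ is precisely the statement that $L_k^*(A\cup\{i\})/L_k^*(A)$ is non-decreasing as $A$ increases, a fact the paper asserts at (6.9) in the proof of Proposition 2.2 without detailed justification, so your convexity argument actually supplies that detail. What your identity buys is transparency: it isolates Assumption (2.9) as needed only for the ``independent'' factor $(1-\sum_k a_k)(1-\sum_k b_k)$, the correction term being nonnegative unconditionally. What the paper's ratio formulation buys is that it is the template that iterates to $|A_0|\ge 3$ in the induction of Proposition 2.2, where no comparably clean closed-form decomposition is available.
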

\begin{proof}
{\it i.} \begin{align*}
{\bf D}^*(A_1,A_2,\cdots,A_m;\{i_0\})&=\prod_{k=1}^{m}L_{k}^*(A_k) \bigg(1-\sum_{k'=1}^{m} \frac{  L_{k'}^*(A_{k'}\cup\{i_0\} ) }{L_{k'}^*(A_{k'})} \bigg)\\&=\prod_{k=1}^{m}L_{k}^*(A_k)\bigg (1-\sum_{k'=1}^{m} \frac{L_{k'}^*(\{i_{1,k'},i_{2,k'},i_0\})}{L_{k'}^*(\{i_{1,k'},i_{2,k'}\})} \bigg)
\end{align*}
where $i_{1,k'}, i_{2,k'} \in A_{k'}$ are two closest elements to $i_0$ among $A_{k'}$ such that if $\min A_{k'} <i_0< \max A_{k'}  ,$ then $  i_{1,k'} <i_0<i_{2,k'}, $ if $i_0>\max A_{k'},$ then  
$  i_{1,k'}<i_{2,k'}<i_0, $  if $i_0<\min A_{k'}, $ then $ i_0<i_{1,k'}<i_{2,k'},  $ and if  $A_{k'}=\emptyset,$ then  $i_{1,k'}=i_{2,k'}=\emptyset$. Therefore,
\begin{align}
&\frac{L_{k'}^*(\{i_{1,k'},i_{2,k'},i_0\})}{L_{k'}^*(\{i_{1,k'},i_{2,k'}\})} \nonumber \\&=\begin{dcases} \frac{ (p_{k'}+ c_{k'}|  i_{1,k'}-i_0|^{2H_{k'}-2}) (p_{k'}+c_{k'}|i_0-i_{2,k'} |^{2H_{k'}-2}) }{   p_{k'}+c_{k'}|i_{1,k'}-i_{2,k'}|^{2H_{k'}-2}} &\text{ if }  \min A_{k'} <i_0< \max A_{k'},\\
p_{k'}+c_{k'}|\max A_{k'}-i_0 |^{2H_{k'}-2}  &\text{ if } i_0>\max  A_{k'},\\
p_{k'}+c_{k'}|\min A_{k'}-i_0 |^{2H_{k'}-2} &\text{ if } i_0<\min A_{k'},\\
p_{k'} &\text{ if } A_{k'}=\emptyset. \nonumber
\end{dcases} \end{align} 
By (2.9), $\sum_{k'=1}^{m} \frac{L_{k'}^*(\{i_{1,k'},i_{2,k'},i_0\})}{L_{k'}^*(\{i_{1,k'},i_{2,k'}\})}<1,$ and the result is derived.
\\ {\it ii.} Since
\begin{align*}
 {\bf D}^*(A_1,A_2,\cdots,A_m;\{i_0,i_1\})&=
{\bf D}^*(A_1,A_2,\cdots,A_m;\{i_0\})-\sum_{k=1}^{m}{\bf D}^*(\cdots,A_k\cup\{i_1\}, \cdots ;\{i_0\}),\end{align*}
it is sufficient if we show 
\begin{align*}\frac{{\bf L}^*(A_1,A_2,\cdots ,A_m)-\sum_{k=1}^{m}{\bf L}^*(\cdots, A_k\cup\{i_0\},\cdots )}{  \sum_{k'=1}^{m}  {\bf L}^*(\cdots ,A_{k'}\cup \{i_1\},\cdots)-\sum_{k,k'=1}^{m}{\bf L}^*(\cdots, A_k\cup\{i_0\},A_{k'}\cup \{i_1\},\cdots )  }>1.
\end{align*}
Note that 
\begin{align*}
\frac{{\bf L}^*(A_1,A_2,\cdots ,A_m)}{ \sum_{k'=1}^{m} {\bf L}^*(\cdots ,A_{k'}\cup \{i_1\},\cdots)}=\frac{1}{\sum_{k'=1}^{m} \frac{L_{k'}^*(\{i_{1,k'},i_{2,k'},i_0\})}{L_{k'}^*(\{i_{1,k'},i_{2,k'}\})}},
\end{align*}
which is non-increasing as set $A_k$ increases for $k=1,\cdots, m$. That is, 
\[\frac{{\bf L}^*(A_1,A_2,\cdots ,A_m)}{ \sum_{k'=1}^{m}  {\bf L}^*(\cdots ,A_{k'}\cup \{i_1\},\cdots)} \leq \frac{{\bf L}^*(A_1',A_2',\cdots ,A_m')}{ \sum_{k'=1}^{m}  {\bf L}^*(\cdots ,A'_{k'}\cup \{i_1\},\cdots)}\]
for any sets $ A
_k\subseteq A'_k, k=1,2,\cdots, m.$
Therefore,
\begin{align*}
\frac{{\bf L}^*(A_1,A_2,\cdots ,A_m)}{ \sum_{k'=1}^{m} {\bf L}^*(\cdots ,A_{k'}\cup \{i_1\},\cdots)}>\frac{\sum_{k=1}^{m}{\bf L}^*(\cdots, A_k\cup\{i_0\},\cdots )}{\sum_{k,k'=1}^{m}{\bf L}^*(\cdots, A_k\cup\{i_0\},A_{k'}\cup \{i_1\},\cdots ) }
\end{align*}
 by Lemma 6.1 {\it iii}. By Lemma 6.1 {\it i} combined with the fact that \[    \frac{1}{\sum_{k'=1}^{m} \frac{L_{k'}^*(\{i_{1,k'},i_{2,k'},i_0\})}{L_{k'}^*(\{i_{1,k'},i_{2,k'}\})}}>1\] from (2.9), the result is derived.
\end{proof}

Note that for any disjoint sets $A_1, A_2, \cdots, A_m, \{i_0,i_1,\cdots , i_n\}$
\begin{align*}    {\bf D}^*(A_1,A_2,\cdots,A_m;\{i_0,i_1,\cdots,i_n\})&= {\bf D}^*(A_1,A_2,\cdots,A_m;\{i_0,i_1,\cdots,i_{n-1}\})\\&- {\bf D}^*(A_1\cup \{ i_n\},A_2,\cdots,A_m;\{i_0, i_1, \cdots, i_{n-1}\})  
\\& - {\bf D}^*(A_1,A_2\cup \{ i_n\},\cdots,A_m;\{i_0, i_1, \cdots, i_{n-1}\})  
\\& \hdots 
\\& - {\bf D}^*(A_1,A_2,\cdots,A_m\cup \{ i_n\} ;\{i_0, i_1, \cdots, i_{n-1}\}).  
\end{align*}

Let's  denote \begin{align*}
\sum_{k=1}^{m}{\bf D}^*(A_1, \cdots,A_{k-1},A_k\cup \{ i_n\}, A_{k+1}\cdots,A_m ;\{i_0, i_1, \cdots, i_{n-1}\}) 
\end{align*}
by 
\[\sum_{k=1}^{m}{\bf D}^*(\cdots,A_k\cup \{ i_n\},\cdots;\{i_0, i_1, \cdots, i_{n-1}\}).\]
\\
\\

{\textit{Proof of Proposition 2.2.}}\\
We will show by mathematical induction that $\{X_{i_1},\cdots,X_{i_n}\}$ is a random vector with probability (2.2-2.4) for any $n$ and any $\{i_1, i_2, \cdots,i_n \} \subset \mathbb{N}$. For $n=1,$ it is trivial. For $n=2,$ it is proved by Lemma 6.2. Let's assume that $\{X_{i_1},\cdots,X_{i_{n'-1}}\}$ is a random vector with probability (2.2-2.4) for any $ \{i_1, i_2,\cdots, i_{n'-1}\}\subset \mathbb{N}$. We will prove that $\{X_{i_1},\cdots,X_{i_{n'}}\}$
is a random vector for any $\{i_1,i_2,\cdots, i_{n'}\}\subset \mathbb{N}.$   

Without loss of generality, fix a set $\{i_1, i_2,\cdots, i_{n'}\} \subset \mathbb{N}.$
 To prove that $\{X_{i_1},\cdots, X_{i_{n'}}\}$ is a random vector with probability (2.2-2.4), we need to show that ${\bf D}^*(A_1, \cdots, A_m;A_0)>0$ for any pairwise disjoint sets, $A_0,\cdots, A_m,$  such that 
$\cup_{k=0}^{m} A_k =\{{i_1},\cdots,{i_{n'}}\}.$ If $|A_0|=0 $ or 1, then the result follows from the definition of ${\bf D}^*$ and Lemma 6.2, respectively.  Therefore, we assume that $|A_0|\geq 2,$ $A_0=\{i'_0,i'_1,\cdots,i'_{n_0}\},$  and $\max A_0=i'_{n_0}.$ Let $A'_0=A_0/\{i'_{n_0}\}.$ We will first show that for any such sets, 
\begin{equation}
   \frac{{\bf D}^*(A_1, \cdots, A_m;A'_0)}{\sum_{\ell=1}^{m}{\bf D}^*(\cdots, A_{\ell}\cup\{i'_{n_0}\},\cdots;A'_0)}>1.\end{equation}
 (6.1) is equivalent to ${\bf D}^*(A_1, \cdots, A_m;A_0)>0.$

For fixed $\ell \in \{1,2,\cdots,m\},$
define the following vectors of length $m-1,$
\begin{align*}
{\bf H}^{\ell}&=(H_1,\cdots,H_{\ell-1},H_{\ell+1},\cdots,H_m),\\ {\bf p}^{\ell}&=(p_1,\cdots,p_{\ell-1},p_{\ell+1},\cdots,p_m),\\
{\bf c}^{\ell}&=(c_1,\cdots,c_{\ell-1},c_{\ell+1},\cdots,c_m)
.\end{align*}  
We also define 
${\bf D}_{(-\ell)} ^*(\cdots, A_{\ell-1},A_{\ell+1},\cdots;A_0):=D_{{\bf H}^{\ell}, {\bf p}^{\ell}, {\bf c}^{\ell}}^*(A_1,  \cdots, A_{\ell-1},A_{\ell+1},\cdots, A_m;A_0). $

Since $\{X_{i}; i\in \cup_{k=1}^m A_k \cup A'_0\} $ is a random vector with (2.2-2.4),
${\bf D}^*(\cdots, A_{\ell},\cdots;A'_0)>0,$ and it can be written as
\begin{align}
{\bf D}^*(\cdots, A_{\ell},,\cdots;A'_0)=P\Big(\cap_{i\in A'_0}\{X_i=0\} \cap_{\substack{i\in A_k\\k=1,\cdots,m\\ k\neq \ell}}\{X_i=k\} \cap_{i\in A_{\ell} }\{X_i=\ell\} \Big)\end{align}\begin{align}&
=P\Big(\cap_{i\in A'_0}\{X_i\in\{0,\ell\}\} \cap_{\substack{i\in A_k\\k=1,\cdots,m\\ k\neq \ell}}\{X_i=k\} \cap_{i\in A_{\ell} }\{X_i=\ell\}\Big )
\nonumber\\&-P\Big(\cap_{i\in A'_0/\{i'_0\}}\{X_i\in\{0,\ell\}\} \cap_{\substack{i\in A_k\\k=1,\cdots,m\\ k\neq \ell}}\{X_i=k\} \cap_{i\in A_{\ell}\cup\{i'_0\} }\{X_i=\ell\} \Big)
\nonumber \\&-P\Big(\cap_{i\in A'_0/\{i'_0,i'_1\}}\{X_i\in\{0,\ell\}\} \cap_{\substack{i\in A_k\\k=1,\cdots,m\\ k\neq \ell}}\{X_i=k\} \cap_{i\in A_{\ell}\cup\{i'_1\} }\{X_i=\ell\}\cap\{X_{i'_0}=0\} \Big)
\nonumber\\&-P\Big(\cap_{i\in A'_0/\{i'_0,i'_1,i'_2\}}\{X_i\in\{0,\ell\}\} \cap_{\substack{i\in A_k\\k=1,\cdots,m\\ k\neq \ell}}\{X_i=k\} \cap_{i\in A_{\ell}\cup\{i'_2\} }\{X_i=\ell\}\cap_{i\in\{i'_0,i'_1\}}\{X_{i}=0\} \Big)
\nonumber\\& \vdots
\nonumber\\&-P\Big( \cap_{\substack{i\in A_k\\k=1,\cdots,m\\ k\neq \ell}}\{X_i=k\} \cap_{i\in A_{\ell}\cup\{i'_{n_0-1}\} }\{X_i=\ell\}\cap_{i\in A'_0/\{i'_{n_0-1}\}}\{X_{i}=0\} \Big).\nonumber
\end{align}
Note that
\begin{eqnarray}
   \hspace{25pt}& &P\Big(\cap_{i\in A'_0}\{X_i\in\{0,\ell\}\} \cap_{\substack{i\in A_k\\k=1,\cdots,m\\ k\neq \ell}}\{X_i=k\} \cap_{i\in A_{\ell} }\{X_i=\ell\} \Big)\\&&=P\Big(\cap_{\substack{i\in A_k\\k=1,\cdots,m\\ k\neq \ell}}\{X_i=k\} \cap_{i\in A_{\ell} }\{X_i=\ell\}\Big)
\nonumber\\&&-P\Big(\cap_{i\in A'_0}\{X_i\in\{1,\cdots,\ell-1,\ell+1,\cdots,m\}\} \cap_{\substack{i\in A_k\\k=1,\cdots,m\\ k\neq \ell}}\{X_i=k\} \cap_{i\in A_{\ell} }\{X_i=\ell\} \Big)\nonumber\\&&
=L_{{\ell}}^*(A_{\ell}){\bf D}_{(-\ell)}^*(\cdots, A_{\ell-1},A_{\ell+1},\cdots;A'_0),\nonumber\end{eqnarray}
and
\begin{eqnarray}
 P\Big(\cap_{i\in \{i'_{j+1},\cdots,i'_{n_0-1}\}}\{X_i\in\{0,\ell\}\} & & \cap_{\substack{i\in A_k\\ k=1,\cdots,m\\ k\neq \ell}}\{X_i=k\} \\ &&\cap_{i\in A_{\ell}\cup\{i'_j\} }\{X_i=\ell\}\cap_{i\in\{i'_0,\cdots,i'_{j-1}\}}\{X_{i}=0\}\Big ) \nonumber \end{eqnarray}
 \begin{eqnarray}&& =   P\Big(\cap_{i\in A'_0/\{i_j'\}}\{X_i\in\{0,\ell\}\}\cap_{\substack{i\in A_k\\ k=1,\cdots,m\\ k\neq \ell}}\{X_i=k\} \cap_{i\in A_{\ell}\cup\{i'_j\} }\{X_i=\ell\}\Big)\nonumber\\&&-\sum_{i^*\in A'_0, i^*<i_j'}  P\Big(\cap_{i\in A'_0/\{i_j',i^*\}}\{X_i\in\{0,\ell\}\}\cap_{\substack{i\in A_k\\ k=1,\cdots,m\\ k\neq \ell}}\{X_i=k\} \cap_{i\in A_{\ell}\cup\{i'_j,i^*\} }\{X_i=\ell\}\Big)
\nonumber\\
&&+ \sum_{\substack{i^*,i^{**}\in A'_0,\\ i^*<i^{**}<i_j'}}  P\Big(\cap_{i\in A'_0/\{i_j',i^*,i^{**}\}}\{X_i\in\{0,\ell\}\}\cap_{\substack{i\in A_k\\ k=1,\cdots,m\\ k\neq \ell}}\{X_i=k\} \cap_{i\in A_{\ell}\cup\{i'_j,i^*,i^{**}\} }\{X_i=\ell\}\Big)
\nonumber\\
&&-\cdots (-1)^{j}  P\Big(\cap_{i\in A'_0/\{i_j',i'_0, i'_1, \cdots, i'_{j-1}\}}\{X_i\in\{0,\ell\}\}\cap_{\substack{i\in A_k\\ k=1,\cdots,m\\ k\neq \ell}}\{X_i=k\} \cap_{i\in A_{\ell}\cup\{i'_j,i'_0, i'_1, \cdots, i'_{j-1}\} }\{X_i=\ell\}\Big)
\nonumber
\\&& = \sum_{\substack{C\cap D=\emptyset\\ C=\emptyset \text{ or } \max C<i'_j \\ C\cup D =A'_0/\{i'_j\}}} (-1)^{|C|}L_{\ell}^*(   A_{\ell}\cup\{i'_j\}\cup C  )  {\bf D}^*_{(-\ell)}(\cdots, A_{\ell-1},A_{\ell+1},\cdots;D) \nonumber
\end{eqnarray} 
where  $|\emptyset|=0.$
Therefore, by (6.2-6.4),
\begin{eqnarray}
&&\hspace{25pt}{\bf D}^*(\cdots, A_{\ell},\cdots;A'_0)
= L_{\ell}^*(A_{\ell}){\bf D}_{(-\ell)}^*(\cdots, A_{\ell-1},A_{\ell+1},\cdots;A'_0)
\\&& +\sum_{j=0}^{n_0-1} \sum_{\substack{C\cap D=\emptyset\\ C=\emptyset \text{ or } \max C<i'_j \\ C\cup D =A'_0/\{i'_j\}}} (-1)^{|C|+1}L_{\ell}^*(   A_{\ell}\cup\{i'_j\}\cup C  )  {\bf D}^*_{(-\ell)}(\cdots, A_{\ell-1},A_{\ell+1},\cdots;D). \nonumber
\end{eqnarray}  (6.5) can also be derived by the definition of $L_{\ell}^*, {\bf D}^* ,$ without using probability for $\{X_{i}; i\in \cup_{k=1}^m A_k \cup A'_0\}$. In the same way, using the definition of $L_{\ell}^*, {\bf D}^* ,$ 

\begin{eqnarray}
&& \hspace{25pt}{\bf D}^*(\cdots, A_{\ell}\cup\{i'_{n_0}\},\cdots;A'_0)
= L_{\ell}^*(A_{\ell}\cup \{i'_{n_0}\}){\bf D}^*_{(-\ell)}(\cdots, A_{\ell-1},A_{\ell+1},\cdots;A'_0)
\\&& +\sum_{j=0}^{n_0-1} \sum_{\substack{C\cap D=\emptyset\\ C=\emptyset \text{ or } \max C<i'_j \\ C\cup D =A'_0/\{i'_j\}}} (-1)^{|C|+1}L_{\ell}^*(   A_{\ell}\cup\{i'_{n_0},i'_j\}\cup C  )  {\bf D}^*_{(-\ell)}(\cdots, A_{\ell-1},A_{\ell+1},\cdots;D).\nonumber
\end{eqnarray}  

Note that, for $j=0,1,\cdots, n_0-1,$
\begin{align*}&g_{\bf{H}, \bf{p},\bf c}(A_1,\cdots,A_{\ell}\cup\{i'_{n_0}\},\cdots,A_m;A'_0;i'_j):=\\& \sum_{\substack{C\cap D=\emptyset\\ C=\emptyset \text{ or } \max C<i'_j \\ C\cup D =A'_0/\{i'_j\}}} (-1)^{|C|+1}L_{\ell}^*(   A_{\ell}\cup\{i'_{n_0},i'_j\}\cup C  )  {\bf D}^*_{(-\ell)}(\cdots, A_{\ell-1},A_{\ell+1},\cdots;D)<0,\end{align*}
since we have \begin{align*}
    &g_{\bf{H}, \bf{p},\bf c}(A_1,\cdots,A_{\ell},\cdots,A_m;A'_0;i'_j)=\\&-P\Big(\cap_{i\in \{i'_{j+1},\cdots,i'_{n_0-1}\}}\{X_i\in\{0,\ell\}\} \cap_{\substack{i\in A_k\\ k=1,\cdots,m\\ k\neq \ell}}\{X_i=k\} \cap_{i\in A_{\ell}\cup\{i'_j\} }\{X_i=\ell\}\cap_{i\in\{i'_0,\cdots,i'_{j-1}\}}\{X_{i}=0\} \Big)\\&<0
\end{align*}  by (6.4),
and  \begin{equation}
    f_{H_{\ell},p_{\ell}, c_{\ell}}(A_{\ell};i'_j;i'_{n_0}):=\frac{g_{\bf{H}, \bf{p},\bf c}(A_1,\cdots,A_{\ell},\cdots,A_m;A'_0;i'_j)}{g_{\bf{H}, \bf{p},\bf c}(A_1,\cdots,A_{\ell}\cup\{i'_{n_0}\},\cdots,A_m;A'_0;i'_j)}>1 .\end{equation}  The last inequality is due to the fact that 

\begin{align*}
    &\frac{g_{\bf{H}, \bf{p},\bf c}(A_1,\cdots,A_{\ell},\cdots,A_m;A'_0;i'_j)}{g_{\bf{H}, \bf{p},\bf c}(A_1,\cdots,A_{\ell}\cup\{i'_{n_0}\},\cdots,A_m;A'_0;i'_j)}\\&=\frac{   \sum_{j=0}^{n_0-1} \sum_{\substack{C \subseteq A'_0/\{i'_j\} \\C=\emptyset \text{ or } \max C<i'_j  }} (-1)^{|C|+1}L_{\ell}^*(   A_{\ell}\cup\{i'_j\}\cup C  )  }{  \sum_{j=0}^{n_0-1} \sum_{\substack{  C \subseteq A'_0/\{i'_j\} \\C=\emptyset \text{ or } \max C<i'_j  }} (-1)^{|C|+1}L_{\ell}^*(   A_{\ell}\cup\{i'_{n_0},i'_j\}\cup C  )   }, \end{align*} and   for any set $C$ such that $\max C<i'_j$ or $C=\emptyset,$ 
 \[ \frac{L_{\ell}^*(   A_{\ell}\cup \{i'_j\}\cup C  )} {L_{\ell}^*(   A_{\ell}\cup\{i'_{n_0},i'_j\} \cup C  )}=\frac{L_{\ell}^*(   A_{\ell}\cup \{i'_j\}  )} {L_{\ell}^*(   A_{\ell}\cup\{i'_{n_0},i'_j\}   )}>1 \] by (2.9). More specifically,
\begin{equation}
     f_{H_{\ell},p_{\ell}, c_{\ell}}(A_{\ell};i'_j;i'_{n_0})=\frac{L_{\ell}^*(   A_{\ell}\cup \{i'_j\}\cup C  )} {L_{\ell}^*(   A_{\ell}\cup\{i'_{n_0},i'_j\} \cup C  )} =
\frac{L_{\ell}^*(i_{\ell,j,1},i_{\ell,j,2})}{L_{
\ell}^*(i_{\ell,j,1},i_{\ell,j,2}, i'_{n_0})}   \end{equation}
where $i_{\ell,j,1},i_{\ell,j,2}$ are the two closest elements to $i'_{n_0}$ among $A_{\ell}\cup \{i'_j\}$. That is, $i_{\ell,j,1}, i_{\ell,j,2} \in A_{\ell}\cup\{i'_j\}$ are two closest elements to $i'_{n_0}$ such that if $\min A_{\ell}\cup\{i'_j\} <i'_{n_0}< \max A_{\ell} ,$ then $  i_{\ell,j,1} <i'_{n_0}<i_{\ell,j,2}, $ and if $i'_{n_0}>\max A_{\ell}\cup\{i'_j\},$ then  
$  i_{\ell,j,1}<i_{\ell,j,2}<i'_{n_0}. $   
\begin{align}
&\frac{L_{\ell}^*(\{i_{\ell,j,1},i_{\ell,j,2}\})}{L_{\ell}^*(\{i_{\ell,j,1},i_{\ell,j,2},i_{n'}\})} \nonumber \\&=\begin{dcases} \frac{   p_{\ell}+c_{\ell}|i_{\ell,j,1}-i_{\ell,j,2}|^{2H_{\ell}-2}}{ (p_{\ell}+ c_{\ell}|  i_{\ell,j,1}-i_{n'}|^{2H_{\ell}-2}) (p_{\ell}+c_{\ell}|i_{n'}-i_{\ell,j,2} |^{2H_{\ell}-2}) } &\text{ if }  \min A_{\ell}\cup\{i'_j\} <i_{n'}< \max A_{\ell},\\
\frac{1}{p_{\ell}+c_{\ell}|i_{\ell,j,2}-i_{n'} |^{2H_{\ell}-2} } &\text{ if } i_{n'}>\max  A_{\ell}\cup\{i'_j\}, \nonumber
\end{dcases} \end{align} which is non-increasing as $j$ increases since $i'_j<i'_{n_0}.$
 Therefore, $ f_{H_{\ell},p_{\ell}, c_{\ell}}(A_{\ell};i'_j;i'_{n_0})$ is non- increasing as $j$ increases.
Also, for fixed $j, C$ such that $\max C <i'_j$ or $C=\emptyset$,
\begin{align}
\frac{L_{\ell}^*(   A_{\ell}\cup\{i'_{n_0},i'_j\} \cup C  )}{L_{\ell}^*(   A_{\ell}\cup \{i'_j\}\cup C  )}
\geq 
\frac{L_{\ell}^*(   A_{\ell}\cup\{i'_{n_0}\}  )}{L_{\ell}^*(   A_{\ell}  )} 
\end{align}
by the fact that 
$ \frac{L_{\ell}^*(A\cup \{i\})}{L_{\ell}^*(A)} $ is non-decreasing as the set $A$ increases.

Combining the above facts with (6.5-6.6), and by Lemma 6.1 {\it i}, \begin{align*}
 \frac{L_{\ell}^*(A_{\ell})}{L_{\ell}^*(A_{\ell}\cup \{i'_{n_0}\})} \leq \frac{{\bf D}^*(\cdots, A_{\ell},\cdots;A'_0)}{{\bf D}^*(\cdots, A_{\ell}\cup\{i'_{n_0}\},\cdots;A'_0)}.
    \end{align*}

Therefore,
\begin{align*}
 \frac{{\bf D}^*(A_1, \cdots, A_m;A'_0)}{\sum_{\ell=1}^{m}{\bf D}^*(\cdots, A_{\ell}\cup\{i'_{n_0}\},\cdots;A'_0)}\geq   \frac{1}{\sum_{\ell=1}^{m}\frac{L_{\ell}^*(A_{\ell}\cup \{i'_{n_0}\})}{L_{\ell}^*(A_{\ell})}}>1,   
\end{align*}
which proves (6.1) and \[ {\bf D}^*(A_1,\cdots, A_{m};A_0)>0. \]
\qed
\\
\\
{\textit{Proof of Theorem 3.3.}}\\
a. 
Let $A_0=\{i_0,i_1,\cdots,i_n\}.$ Note that
\begin{align*}&P( X_{i_1'}=\ell| \cap_{k=0,\cdots,m} \cap_{i\in A_k} \{ X_i=k\} )=\frac{{\bf D}^*(\cdots, A_{\ell}\cup \{i_1'\}, \cdots;A_0 )}{{\bf D}^*(A_1,\cdots,A_m;A_0 )}=\\&\frac{L_{\ell}^*(A_{\ell}\cup \{i_1'\}){\bf D}^*_{(-\ell)}(\cdots, A_{\ell-1},A_{\ell+1},\cdots;A_0)+\sum_{j=0}^{n}g_{\bf{H}, \bf{p},\bf c}(A_1,\cdots,A_{\ell}\cup\{i_1'\}, \cdots, A_m;A_0;i_j)}{L_{\ell}^*(A_{\ell}){\bf D}^*_{(-\ell)}(\cdots, A_{\ell-1},A_{\ell+1},\cdots;A_0)+\sum_{j=0}^{n}g_{\bf{H}, \bf{p},\bf c}(A_1,\cdots,A_{\ell}, \cdots, A_m;A_0;i_j)}.  \end{align*}
Since \[ \frac{g_{\bf{H}, \bf{p},\bf c}(A_1,\cdots,A_{\ell}\cup\{i_1'\}, \cdots, A_m;A_0;i_j)}{g_{\bf{H}, \bf{p},\bf c}(A_1,\cdots,A_{\ell}, \cdots, A_m;A_0;i_j)} \]
is non-decreasing as $j$ increases, and by (6.7-6.9)
\[ \frac{L_{\ell}^*(A_{\ell}\cup\{i_1'\})}{L_{\ell}^*(A_{\ell})}\leq \frac{g_{\bf{H}, \bf{p},\bf c}(A_1,\cdots,A_{\ell}\cup\{i_1'\}, \cdots, A_m;A_0;i_j)}{g_{\bf{H}, \bf{p},\bf c}(A_1,\cdots,A_{\ell}, \cdots, A_m;A_0;i_j)}, \]
 the result follows by Lemma 6.1 {\it ii}.
\\b.
\begin{align*}
& \frac{ P( X_{i_2'}=\ell| \cap_{k=0,\cdots,m} \cap_{i\in A_k} \{ X_i=k\})
}{    P( X_{i_3'}=\ell| \cap_{k=0,\cdots,m} \cap_{i\in A_k} \{ X_i=k\} )}=\frac{{\bf D}^*(\cdots, A_{\ell}\cup \{i_2'\}, \cdots;A_0 )}{{\bf D}^*(\cdots, A_{\ell}\cup \{i_3'\}, \cdots;A_0 )}=\\&\frac{L_{\ell}^*(A_{\ell}\cup \{i_2'\}){\bf D}^*_{(-\ell)}(\cdots, A_{\ell-1},A_{\ell+1},\cdots;A_0)+\sum_{j=0}^{n}g_{\bf{H}, \bf{p} ,\bf{c}^{\ell}}(A_1,\cdots,A_{\ell}\cup\{i_2'\}, \cdots, A_m;A_0;i_j)}{L_{\ell}^*(A_{\ell}\cup \{i_3'\}){\bf D}^*_{(-\ell)}(\cdots, A_{\ell-1},A_{\ell+1},\cdots;A_0)+\sum_{j=0}^{n}g_{\bf{H}, \bf{p}}(A_1,\cdots,A_{\ell}\cup\{i_3'\}, \cdots, A_m;A_0;i_j)}.
\end{align*}

For fixed $j, C$ such that $\max C <i_j$,
\begin{align*}
\frac{L_{\ell}^*(   A_{\ell}\cup\{i_2',i_j\} \cup C  )}{L_{\ell}^*(   A_{\ell}\cup \{i_3', i_j\}\cup C  )}
\leq 
\frac{L_{\ell}^*(   A_{\ell}\cup\{i_2'\}  )}{L_{\ell}^*(   A_{\ell}\cup\{i_3'\} )} ,
\end{align*}
and \[\frac{L_{\ell}^*(   A_{\ell}\cup\{i_2',i_j\} \cup C  )}{L_{\ell}^*(   A_{\ell}\cup \{i_3', i_j\}\cup C  )}\] is non-increasing as $j$ increases. 
Therefore, the result follows by Lemma 6.1 {\it i}.
\qed


\begin{thebibliography}{9}
\bibitem{Afr}
Afroz, F. (2018).
\textit{Estimating Overdispersion in Sparse Multinomial Data.} 
Doctoral Thesis,
The University of Otago.


\bibitem{Afr1}
Afroz, F. and Shabuz, Z.R. (2020).
{Comparison Between Two Multinomial Overdispersion Models Through Simulation.}
\textit{Dhaka Univ. J. Sci.} {\bf 68}  45-48.







\bibitem{BE}
Benson, D.A,  Meerschaert, M.M, Baeumer, B and  Scheffler, H.-P.  (2006).
{Aquifer operator-scaling and the effect on solute mixing and dispersion.} 
\textit{Water Resour. Res.} {\bf 42}  W01415, doi:10.1029/2004WR003755.
\bibitem{CAR}
Carpio, K.J.E. (2006).
\textit{Long-range dependence of Markov chains.} 
Doctoral Thesis,
The Australian National University.

\bibitem{CAR2}
Carpio, K.J.E. and Daley, D.J. (2007).
{Long-Range Dependence of Markov Chains in Discrete Time on Countable State Space.}
\textit{J. Appl. Probab.} {\bf 44}   1047-1055. 

\bibitem{Dal1} Daley, J. and Vesilo, R. (1997).
 {Long range dependence of point processes, with queueing examples.} \textit{Stoch Process Their Appl.} {\bf 70}  265–282.

\bibitem{Dal2} Daley, J. and Vesilo, R. (2000) 
  {Long range dependence of inputs and
outputs of classical queues.} \textit{Fields Inst. Commun.} {\bf 28}
179–186.



\bibitem{Dea}
Dean, C.B. and Lundy, E.R. (2014).
\textit{Overdispersion.} 
Wiley Stats Ref: Statistics Reference Online,
doi:10.1002/9781118445112.stat06788.pub2.

\bibitem{DEL}
Delgado, R. (2007).
{A reflected fBm limit for fluid models with ON/OFF sources under heavy traffic }.
\textit{Stoch. Process. Their Appl.} {\bf 117}  188-201.
\bibitem{fel}
Feller, W. (1968). \textit{An Introduction to Probability Theory and Its Applications,}  Vol. 1, 3rd ed. John Wiley, New York. 


\bibitem{HOS}
Hosking, J.R.M. (1981).
{Fractional differencing}.
\textit{Biometrika,} {\bf 68} 165-176.

\bibitem{HOS2}
Hosking, J.R.M. (1984).
{Modeling persistence in hydrological time series using fractional differencing}.
\textit{Water Resour. Res.} {\bf 20} 1898-1908.

\bibitem{Hu1}
Hurst, H. (1951). {Long-term storage capacity of reservoirs.} \textit{ Civ. Eng. Trans.} {\bf 116} 770–808.

\bibitem{Hu2}
 Hurst, H. (1955). {Methods of using long-term storage in reservoirs.} \textit{P I CIVIL ENG-CIV EN.} Part I :519–5

\bibitem{Lan}
Landsman, V., Landsman, D. and Bang, H. (2019).
{Overdispersion models for correlated multinomial data: applications to blinding assessment.}
\textit{Stat Med.} {\bf 38}  4963–4976.




\bibitem{LEE}
Lee, J. (2021). { Fractional Bernoulli process and fractional binomial distribution.} \textit{Depend. Model.} {\bf 9} 1–12. 
\bibitem{Lee2}
Lee, J. (2022). { Generalized Bernoulli process and fractional binomial distribution II.}
	arXiv:2209.01516 



\bibitem{MAJ}
Majewski, K. (2005).
{Fractional Brownian heavy traffic approximations of multiclass feedforward queueing networks}.
\textit{Queueing Syst.} {\bf 50}  199-230.


\bibitem{Man}
Mandelbrot, B. and Van Ness, J. (1968).
{  Fractional Brownian motions, fractional noises and applications.} \textit{SIAM Rev.} 
 {\bf 10}  422-437.
 
\bibitem{Mos}
Mosimann, J.E. (1962).
{On the Compound Multinomial Distribution, the Multivariate $\beta$- Distribution, and
Correlations Among Proportions.}
\textit{Biometrika}, {\bf 49} 65-82.

\bibitem{Poo}
Poortema, K. (1999).
{On modelling overdispersion of counts.}
\textit{Stat Neerl.} {\bf 53} 5-20.

\bibitem{Sam}  Samorodnitsky, G. (2016).
{Stochastic Processes and Long Range Dependence.} Springer.

\end{thebibliography}
\end{document}